\newcommand{\ges}{\geqslant}
\newcommand{\les}{\leqslant}
\newcommand{\p}{\mathrm{pr}}
\newcommand{\e}{{E}}
\newcommand{\R}{\mathbb{R}}
\newcommand{\Sp}{\mathbb{S}_+}
\newcommand{\diag}{\mathrm{diag}\,}
\newcommand{\ind}[1]{\mbox{\rm\large 1}_{\{#1\}}}
\newcommand{\norm}[1]{\|#1\|_2}
\def\v{{\varepsilon}}
\theoremstyle{plain}
\newtheorem{theorem}{Theorem}
\newtheorem{proposition}[theorem]{Proposition}
\newtheorem{corollary}[theorem]{Corollary}
\newtheorem{lemma}[theorem]{Lemma}
\newtheorem{remark}[theorem]{Remark}
\newtheorem{algo}[theorem]{Algorithm}
\theoremstyle{definition}
\newtheorem{assumption}{Assumption}
\numberwithin{theorem}{section}
\title[Spherical clustering in detection of groups of concomitant extremes]{Spherical clustering\\ in detection of groups of concomitant extremes}
\author{V.~Fomichov}
\author{J.~Ivanovs}
\keywords{Angular measure; concomitant extremes; dimension reduction; principal components; spherical clustering.}
\address{Aarhus University, Department of Mathematics, Denmark}
\begin{document}

\begin{abstract}
There is growing empirical evidence that spherical $k$-means clustering performs well at identifying groups of concomitant extremes in high dimensions, thereby leading to sparse models.
We provide one of the first theoretical results supporting this approach, but also demonstrate some pitfalls.
Furthermore, we show that an alternative cost function may be more appropriate for identifying concomitant extremes, and it results in a novel spherical $k$-principal-components clustering algorithm.
Our main result establishes a broadly satisfied sufficient condition guaranteeing the success of this method, albeit in a rather basic setting. Finally, we illustrate in simulations that $k$-principal-components outperforms $k$-means
in the difficult case of weak asymptotic dependence within the groups.
\end{abstract}

\maketitle

\section{Introduction}
Statistical analysis of high-dimensional data typically relies on various sparsity assumptions, unless structural domain knowledge is available. In the study of extremes, sparsity is especially important since the number of extreme observations is small by definition. The recent survey of~\cite{eng2020_review} reviews different notions of sparsity and the available tools in the context of extreme value statistics. Fundamental here is the notion of concomitant extremes~\citep{cha2015,goi2016}, where the focus is on the groups of variables which can be large simultaneously while others are small. Importantly, extremal dependence can be modelled separately in these groups and then combined into a mixture model, thereby leading to dimension reduction.

Mathematically speaking, our interest is in the identification of the lower dimensional faces of $\R^d_+$ charged with mass by the so-called exponent measure. Alternatively, we may look at the faces of the positive simplex charged with mass by the angular (spectral) probability measure. This task, however, is highly non-trivial since only approximate angles coming from a pre-limit distribution, which normally also puts mass on the interior of the simplex, can be obtained in practice.

Various approaches to identification of concomitant extremes and respective maximal sets have been explored in the literature, see~\cite{goi2016,goi2017,sim2018,chi2019,Meyer_Wintenberger_2021,jalalzai2020informative}. One of the basic ideas is to use a certain neighbourhood of a given face and to assign the respective mass to this face. The simplest method~\citep{goi2016} amounts to thresholding the entries of the rescaled extreme observations. Such thresholding, however, often produces a large number of faces with few corresponding observations, see the discussion in~\cite{goi2017} and a numerical illustration in Figure~\ref{fig:elbow} below. Certain grouping of faces is then necessary, see~\cite{chi2019}.

A different approach was proposed by~\cite{cha2015}, and it amounts (ignoring a preliminary dimension reduction step) to spherical $k$-means clustering~\citep{dhi2001} of the approximate angles. Consistency of spherical clustering for a general cost function has been recently established by~\cite{Janssen_Wan}, who also advocated interpreting centroids as `prototypes of extremal dependence'. Each centroid, or cluster, can then be attributed to a certain face.
One way of doing this is to use thresholding to define respective faces, which has been employed in the review of~\cite{eng2020_review} to obtain interpretable results for a real world 68-dimensional dataset. Importantly, this method can be seen as a \emph{reverse threshold-and-group approach}. Finally, our problem is very different from those addressed by subspace clustering techniques for high-dimensional data~\citep[Ch.~15]{clustering_book}, which are popular in computer science.

Clustering in detection of concomitant extremes and prototypes of extremal dependence may seem intuitive to some extent, and there is growing empirical evidence in support of this method. The drawback is that there is no theoretical justification, apart from the consistency result by~\cite{Janssen_Wan}. Building upon this result, we provide some basic theory showing that the spherical clustering procedure must indeed work in certain cases. However, we also identify some pitfalls and show that an alternative cost function may be more appropriate for identification of concomitant extremes. It results in a novel \emph{spherical $k$-principal-components ($k$-pc) clustering} algorithm (a similar approach in a different context was considered in the empirical study of~\cite{Hill_et_al}; see Remark~\ref{rem:Hill} below). The name derives from the fact that the Perron--Frobenius eigenvectors of certain non-negative definite matrices with non-negative entries are used instead of the means in the updates of the algorithm. Interestingly, these matrices are the analogues of the cross-moment matrices used by~\cite{coo2019} and \cite{dre2019} in their principal component analysis of extremal dependence, whereas the bivariate case has been considered by~\cite{lar2012} in a different context. 

Our main results, Theorem~\ref{thm:means} and Theorem~\ref{thm:eigs}, provide sufficient conditions guaranteeing the success of clustering in detection of concomitant extremes, albeit in a rather basic setting, when the corresponding groups of coordinates are disjoint. Importantly, $k$-means fails in some fundamental cases, whereas $k$-pc is much more robust. Our counterexamples provide further intuition about the applicability of both methods. We conclude with a simulation study which confirms our findings and show that the $k$-means algorithm, unlike $k$-pc, has serious problems in the setting where many pairs in the groups of concomitant extremes exhibit weak asymptotic dependence. Arguably, this is one of the most important settings from the applications point of view~\citep{davison2013geostatistics}. In many other cases the results are almost identical, including the 68-dimensional dataset of river discharges and, to a lesser extent, the dietary intake data from~\cite{Janssen_Wan}, which are analysed in Subsection~\ref{subsection_river_discharges} and the Supplementary Material, respectively.

\section{Preliminaries on multivariate extremes}
\label{sec:prelim}
\subsection{The angular distribution}
Here we recall some elementary theory of multivariate extremes needed in this paper, and refer the reader to the monograph~\cite{res2008} and the surveys of~\cite{dav2015,eng2020_review} for further reading.
It is a common practice in extremes to first standardize the marginals and then study extremal dependence.
Thus we assume that a random vector $Y\in\R^d,d\ges 2$, of interest has unit Fr\'echet marginal distributions: $\p(Y_i\les y)=\exp(-1/y)$ for $y>0$, $i=1,\ldots,d$. The fundamental regularity assumption on $Y$, called multivariate regular variation, can be stated as the weak convergence of the normalized $Y$ conditional on its norm being large:
\begin{equation}
\label{eq:MRV}
\left.\frac{Y}{\norm{Y}}\right|\norm{Y}>t\quad\Rightarrow\quad X,\qquad t\to\infty,
\end{equation}
where the distribution of $X$ is called angular or spectral. This distribution is our main object of interest as it characterizes extremal dependence. It must be noted that any norm can be used here, but we choose the Euclidean norm $\norm{\cdot}$ so that $X$ lies in $\Sp^{d-1}=\{x\in\R^d_+:\norm{x}=1\}$, which is convenient when considering angular dissimilarities in Section~\ref{sec:clustering}. Due to marginal standardization, the mean of $X$ must point to the centre of $\Sp^{d-1}$:
\begin{equation}
\label{eq:means}
\e(X_1)=\cdots=\e(X_d)=\mu>0.
\end{equation}
This property has a certain balancing effect which will come in use later.

The random vector $Y$ satisfying~\eqref{eq:MRV} and having unit Fr\'echet marginals is in the max-domain of attraction of a max-stable distribution uniquely specified by the angle~$X$. It is said that $Y$ admits asymptotic independence (complete dependence) if the limiting max-stable vector has mutually independent (completely dependent) components. The distribution of the corresponding angle $X$ is then as follows:
\begin{itemize}
\item[(i)] \emph{Asymptotic independence}: the distribution of $X$ has mass $1/d$ at every standard basis vector.
\item[(ii)] \emph{Asymptotic complete dependence}: deterministic $X=(1,\ldots,1)/\sqrt{d}$.
\end{itemize}

A common measure of pairwise asymptotic dependence is the tail dependence coefficient:
\[
\chi_{ij}=\lim_{t\to\infty} \p(Y_j>t \mid Y_i>t)=\frac{1}{\mu}\e(X_i\wedge X_j)\in [0,1],
\]
where 0 corresponds to the asymptotic independence in (i) and 1 to the complete dependence in (ii). These are the boundary cases also in other senses as demonstrated by the following result.

\begin{lemma}
\label{lem:mu}
The constant $\mu$ in~\eqref{eq:means} satisfies $1/d\les\mu\les 1/\sqrt{d}$, where the lower bound is achieved iff case (i) holds, and the upper bound is achieved iff case (ii) holds.
\end{lemma}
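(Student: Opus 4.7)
The plan is to exploit the two constraints on $X$: the normalization $\|X\|_2=1$ (equivalently $\sum_i X_i^2 = 1$ a.s.) and the componentwise bound $X_i \in [0,1]$ that it implies. Taking expectations of $\sum_i X_i^2 = 1$ yields the single identity $\sum_i \e(X_i^2) = 1$, and both bounds will follow by sandwiching $\e(X_i^2)$ between natural quantities that involve $\mu$.

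For the upper bound I would apply Jensen's inequality (or equivalently Cauchy--Schwarz) componentwise: $\mu^2 = (\e X_i)^2 \leq \e(X_i^2)$. Summing over $i$ and using $\sum_i \e(X_i^2) = 1$ together with~\eqref{eq:means} gives $d\mu^2 \leq 1$, i.e.\ $\mu \leq 1/\sqrt d$. Equality in Jensen's inequality for every coordinate forces each $X_i$ to be a.s.\ constant, so $X$ is deterministic; combined with $\|X\|_2 = 1$ and nonnegativity, this pins $X$ down to $(1,\ldots,1)/\sqrt d$, which is exactly case~(ii).

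For the lower bound I would use $X_i \in [0,1]$, which implies $X_i^2 \leq X_i$ pointwise. Taking expectation and summing yields $1 = \sum_i \e(X_i^2) \leq \sum_i \e(X_i) = d\mu$, so $\mu \geq 1/d$. Equality requires $X_i^2 = X_i$ a.s.\ for every $i$, i.e.\ each $X_i \in \{0,1\}$ a.s.; together with $\|X\|_2 = 1$ this means $X$ is a.s.\ a standard basis vector. The symmetry~\eqref{eq:means} then forces the distribution over the basis vectors to be uniform with probability $1/d$ each, which is case~(i).

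No step looks genuinely difficult; the whole argument is driven by $\sum_i X_i^2 = 1$ and $X_i \in [0,1]$. The only subtlety to be careful with is the equality characterizations, where I want to avoid claiming more than Jensen/the inequality $X_i^2\leq X_i$ give. In particular, for the upper bound I need to conclude deterministic $X$ from a.s.\ constancy of every coordinate (trivial), and for the lower bound I need to pass from "$X$ is a.s.\ some basis vector" to the uniform distribution over basis vectors using the marginal symmetry~\eqref{eq:means} rather than assuming it.
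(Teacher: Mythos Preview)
Your proof is correct and essentially matches the paper's. The upper bound and its equality case are identical to the paper's argument. For the lower bound the paper writes $\sum_i X_i \geq \|X\|_2 = 1$ directly (the $\ell_1$--$\ell_2$ norm comparison for nonnegative vectors), whereas you obtain the same inequality via $X_i^2 \leq X_i$; these are equivalent one-line observations, and your equality characterization ($X_i\in\{0,1\}$ a.s.) is just a rephrasing of the paper's $X_iX_j=0$ a.s.\ for $i\neq j$.
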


\begin{proof}
Note that $\sum_i X_i\ges\norm{X}=1$ and take expectation to get the lower bound. The upper bound follows from $1=\sum_i \e(X_i^2)\ges d\mu^2$. Equality in the latter readily implies that all $X_i$ are constant, whereas equality in the former implies that $X_iX_j=0$ a.s.\ for all $i\neq j$.
\end{proof}

In applications it is common to standardize the observations using the empirical marginal distribution functions and then to choose threshold $t$ large, but such that sufficiently many approximate angles are obtained for the statistical analysis of extremal dependence.
This is exactly the procedure used by~\cite{Janssen_Wan}, whose consistency results we will employ in the following. The only difference is that we use unit Fr\'echet marginals instead of standard Pareto, but these are tail equivalent and no changes in the theory arise.

\subsection{Concomitant extremes}
In high-dimensional settings it is of crucial importance to identify the non-empty sets of indices $I\subset\{1,\ldots,d\}$ such that $\p(X_i>0\; \forall i\in I, X_j=0\; \forall j\notin I)>0$, and the respective probabilities. We will focus on the maximal sets -- those not included in other such sets.
Let us define the corresponding faces of $\R^d_+$:
\begin{equation}
\label{eq:faces}
F_I=\{x\in\R^d_+:x_j=0\;\forall j\notin I\}.
\end{equation}
According to~\eqref{eq:means} the probabilities $\p(X_i>0)$ are positive  for every $i$, and so every index must be contained in at least one maximal set. The problem at hand is sparse if the cardinality of such sets $I$ is much smaller than $d$ and their number is manageable.

The case (i) of asymptotic independence corresponds to $d$ faces of dimension~1.
A more interesting situation occurs when the indices can be partitioned so that asymptotic independence is present between the (disjoint) groups, but not necessarily within these groups.
Then any prototype of extremal dependence must belong to some group and, ideally, each group should have a prototype, see Section~\ref{sec:concom} for further discussion.
Thus we arrive to a \emph{basic test scenario}, whereas applicability of the clustering method is much more general:

\begin{assumption}
\label{assumption}
There exists $2\les k\les d$ and a partition $(I_1,\ldots,I_k)$ of the index set $\{1,\ldots,d\}$ such that the union of the respective faces $F_{I_1},\ldots,F_{I_k}$ contains the support of the angular measure:
$\p(X\in F_{I_1}\cup\cdots\cup F_{I_k})=1$. Without loss of generality we assume that the indices in $I_i$ are smaller than those in $I_j$ for all $1\les i<j\les k$.
\end{assumption}

It may help to also assume that the above partition is unique, but this is not strictly required.
The corresponding faces $F_{I_1},\ldots,F_{I_k}$ are mutually orthogonal: $u^\top v=0$ for all $u\in F_{I_i}$, $v\in F_{I_j}$ and $i\neq j$, and the only common element is the origin.
Even in this scenario identification of the groups of concomitant extremes $I_1,\ldots,I_k$ using approximate angles may not be straightforward. For a low-dimensional case of $d=3$ and the partition $I_1=\{1,2\},I_2=\{3\}$, Figure~\ref{fig:ex_clust} depicts a sample from the exact angular law and two samples from its approximations as could arise in practice for different threshold levels~$t$.
In these examples both clustering methods produce similar centroids (crosses for $k$-means and lozenges for $k$-pc) which are close to the respective faces, and thus the latter can be easily identified.

\begin{figure}[!ht]
\centering
\includegraphics[width=0.25\textwidth]{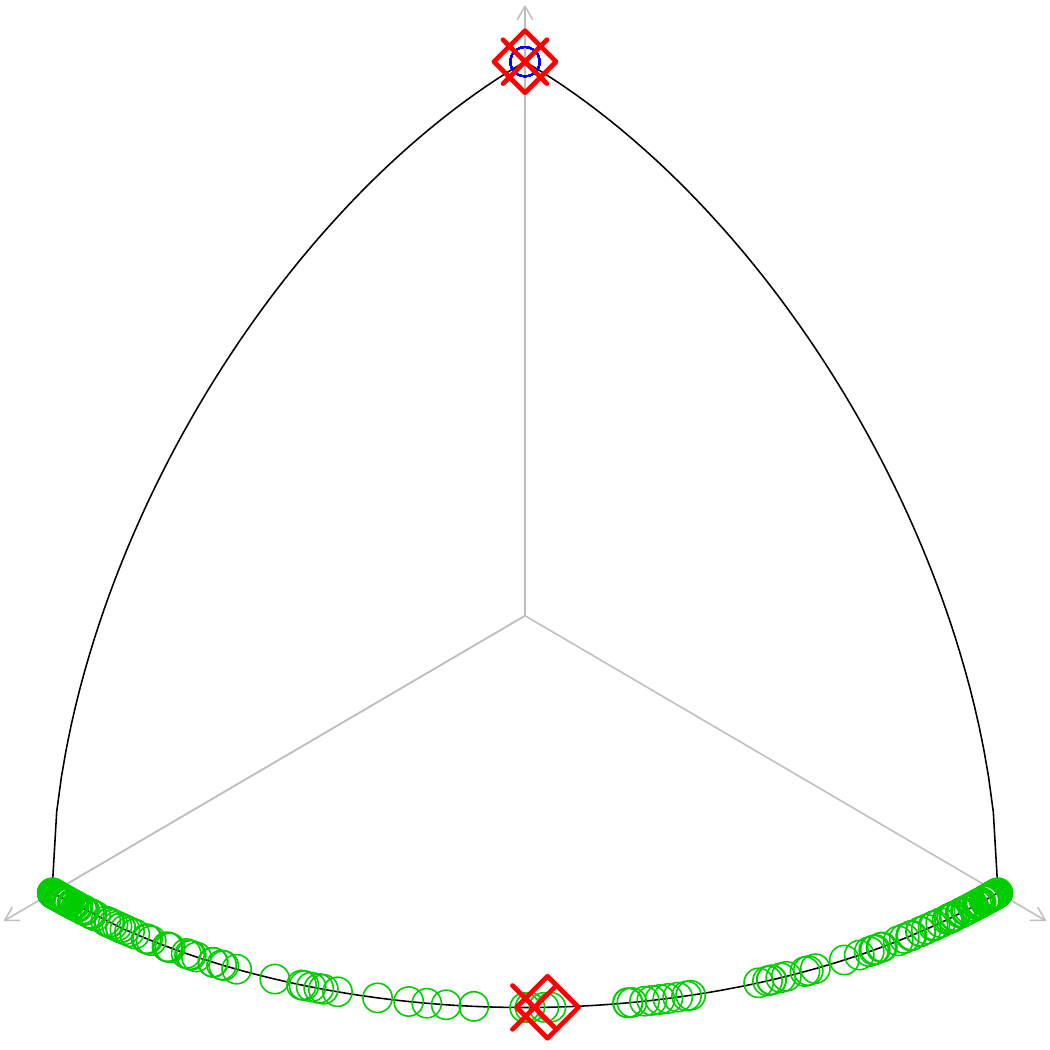}\qquad
\includegraphics[width=0.25\textwidth]{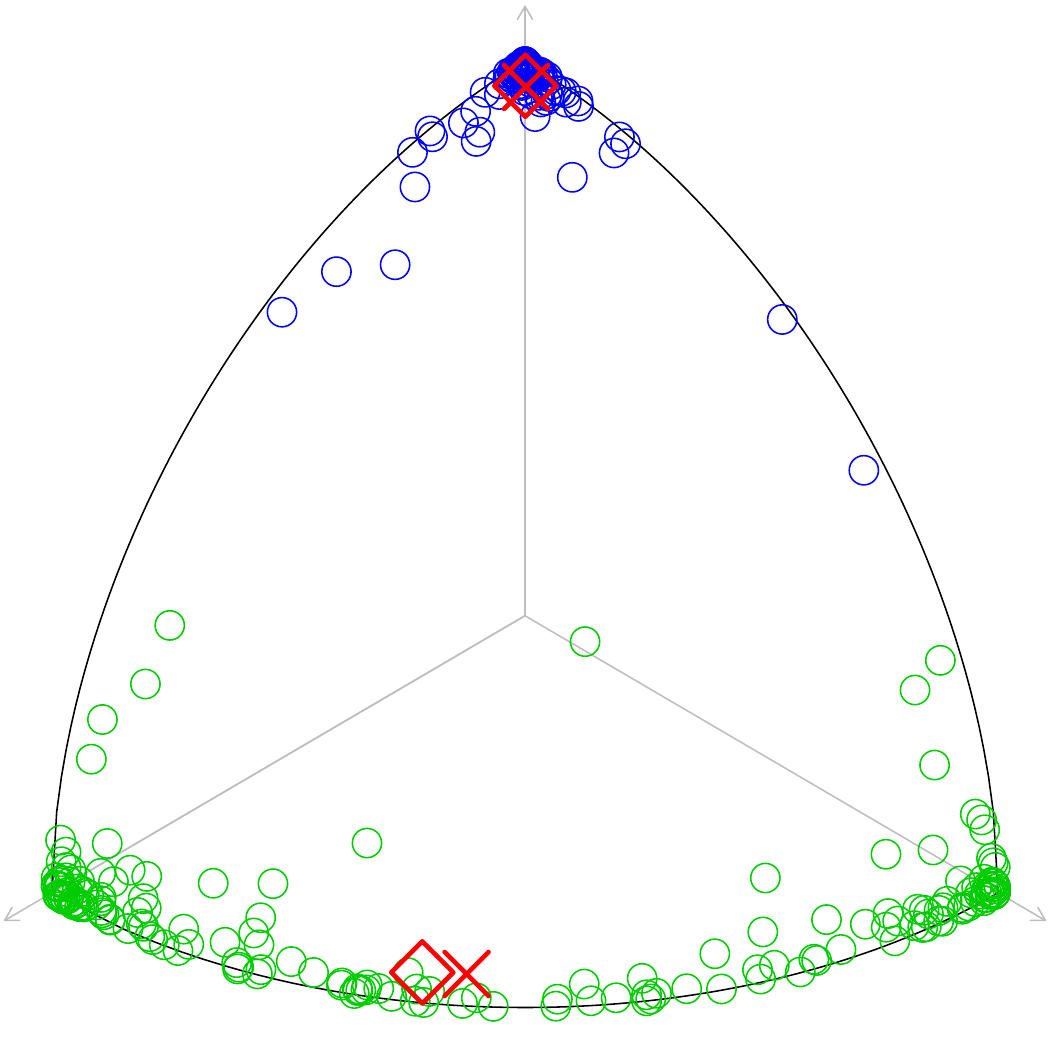}\qquad  
\includegraphics[width=0.25\textwidth]{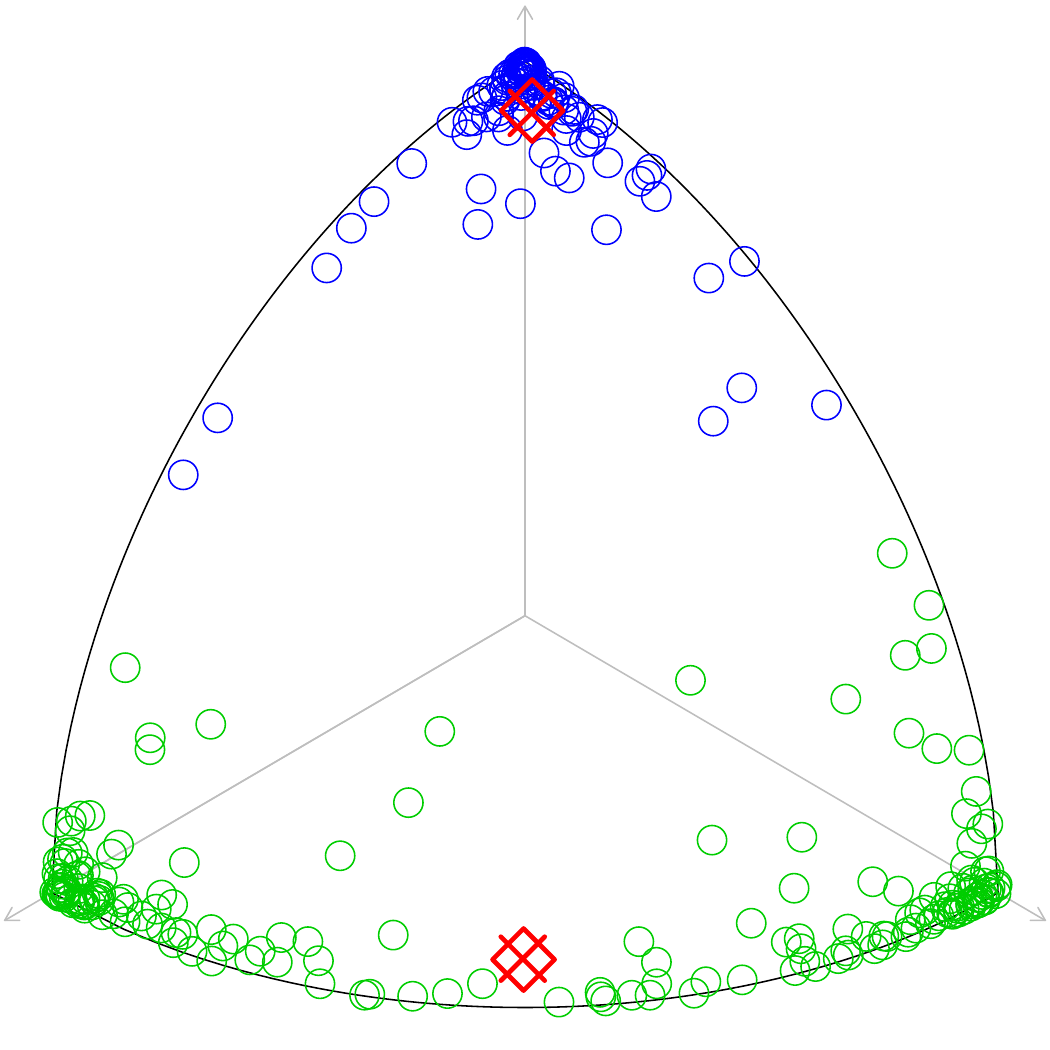}
\caption{A sample of 300 points in $\Sp^2$ from the exact angular distribution supported by $F_{\{1,2\}}\cup F_{\{3\}}$ and from its approximations. The colours correspond to $2$-means~/~$2$-pc clustering with red crosses~/~lozenges being the centroids.}
\label{fig:ex_clust}
\end{figure}

Let us define the corresponding submodels $X_I$ for $I=I_1,\ldots,I_k$ which have the law of the restriction of $X$ to the indices in $I$ conditional on $\{X\in F_I\}$, and let $p_I=\p(X\in F_I)>0$ be the respective probabilities.
Under Assumption~\ref{assumption} we thus obtain a complete description of the law of $X$ as a mixture model.
Furthermore, $X_I\in\Sp^{|I|-1}$ satisfies~\eqref{eq:means} with $\mu_I=\mu/p_I$ and so it is a legitimate angular model in dimension~$|I|$.
Finally, we define the cross-moment matrices
\begin{equation}
\label{eq:Sigma}
\Sigma_I=\e(X_IX_I^\top),\qquad\Sigma=\e(XX^\top)=\diag(p_{I_1}\Sigma_{I_1},\ldots, p_{I_k}\Sigma_{I_k}),
\end{equation}
where the latter is a block diagonal matrix with $p_I\Sigma_I$ on the diagonal. Note that every $\Sigma_I$ is a non-negative definite matrix with trace $1$, since $\norm{X_I}=1$.
An expression of $\Sigma_I$ for the H\"usler-Reiss distribution is given in Supplementary Material, whereas Remark~\ref{rem:sigma} provides connections to the literature on PCA for extremes.



\section{Spherical clustering}
\label{sec:clustering}
\subsection{Spherical dissimilarities and Voronoi diagrams}
In this section we consider an arbitrary random variable $X\in\Sp^{d-1}$ not necessarily satisfying~\eqref{eq:means} and return to concomitant extremes in Section~\ref{sec:concom}.
Spherical clustering for a given integer $k\ges 1$ amounts to the following stochastic program:
\begin{equation}
\label{eq:clust}
\min_{x_1,\ldots,x_k\in\Sp^{d-1}} \e\left(\min_{i=1}^k\; c(x_i,X)\right)= 1-\max_{x_1,\ldots,x_k\in\Sp^{d-1}} \e\left(\max_{i=1}^k\; r(x_i^\top X)\right),
\end{equation}
where $c:\Sp^{d-1}\times\Sp^{d-1}\mapsto [0,1]$ is a continuous dissimilarity (cost) function. Here we assume that the dissimilarity function has the form $c(x,y)=1-r(x^\top y)$ for a strictly increasing continuous (reward) function $r:[0,1]\mapsto[0,1]$.
In words, the aim is to find $k$ \emph{centroids} $x_i\in\Sp^{d-1}$  such that the expected dissimilarity of $X$ and the closest centroid is minimal.
It is noted that the objective function is continuous in $(x_1,\ldots,x_k)$ and the set $\Sp^{d-1}$ is compact, which readily implies that the minimum must be attained.
Uniqueness up to a permutation, however, is not guaranteed.

There is a wide choice of popular angular dissimilarity functions used in a variety of contexts~\citep{palarea2012dealing}, but the above form has an appealing property
\[
c(x_1,y)\les c(x_2,y)\qquad\Leftrightarrow\qquad x_1^\top y\ges x_2^\top y\qquad \Leftrightarrow\qquad\norm{x_1-y}\les\norm{x_2-y},
\]
where $x_1,x_2,y\in\Sp^{d-1}$. Thus, for the given $k$ points $x_1,\ldots,x_k\in\Sp^{d-1}$ the positive part of the unit sphere will be partitioned into sets $A_i$ by means of hyperplanes in $\R^d$ passing through the origin:
\begin{equation}
\label{eq:partition}
A_i=\{y\in\Sp^{d-1}:\; x_i^\top y\ges x_j^\top y \;\forall j>i,\; x_i^\top y>x_j^\top y \;\forall j<i\},\quad i=1,\ldots,k.
\end{equation}
For concreteness, here the equality is broken in favour of the smallest index. Note also that these form the Voronoi diagram of $\Sp^{d-1}$ with respect to the Euclidean norm.

The most popular examples of such cost functions are the scaled Euclidean distance, the angular distance and cosine dissimilarity $c_1$, plus we additionally define $c_2$ corresponding to the quadratic reward function $r(u)=u^2$:
\begin{equation}
\label{eq:angle}
\sqrt{1-x^\top y},\qquad\sphericalangle(x,y)=\frac{2}{\pi}\arccos(x^\top y),\qquad c_p(x,y)= 1-(x^\top y)^p,\quad p=1,2.
\end{equation}
The angular distance yields the angle between $x$ and $y$ as the fraction of the right angle $\pi/2$, and such a distance will be useful in numerical experiments below.
In clustering one of the most important properties is the simplicity and interpretability of the resulting procedure, and thus we will focus our attention on $c_p$ with $p=1,2$. In this case the above clustering problem~\eqref{eq:clust} reduces to maximizing the expected maximal reward $\e\big(\max_i (x^\top_iX)^p\big)$.
For $p=1$ we retrieve spherical $k$-means clustering~\citep{dhi2001}, which is intimately related to classical $k$-means. Finally, $p=2$ leads to what we call spherical $k$-pc clustering, which is described below.

\begin{remark}
Importantly, for $p=2$ we have $r'(0)=0$, which implies higher cost for two almost orthogonal vectors as compared to the $p=1$ case. This provides some heuristic support for using the $k$-pc method instead of $k$-means in identification of groups of concomitant extremes.
\end{remark}

\begin{remark}\label{rem:Hill}
While revising this work, we discovered that a similar algorithm had been considered in~\cite{Hill_et_al} under the name `perpendicular spherical $k$-means'.
The same cost function is used in this empirical work without observing a link to principal eigenvectors.
\end{remark}

\subsection{The optimal centroids and partitions}
Let us provide some basic results underlying the spherical clustering procedure for the linear and quadratic reward functions.
In the first case we will need the normalized means of $X$ restricted to the sets $A_i$:
\begin{equation}
\label{eq:xmean}
\overline x_i=\e(X\ind{X\in A_i})/\norm{\e(X\ind{X\in A_i})}\in\Sp^{d-1},
\end{equation}
where the partition $(A_1,\ldots,A_k)$ is given by~\eqref{eq:partition} for an arbitrary choice of $x_1,\ldots,x_k\in\Sp^{d-1}$.
In order to avoid division by~0 we take $\overline x_i=x_i$ when $\p(X\in A_i)=0$.
In the second case we define $d\times d$ matrices
\begin{equation}
\label{eq:sigmai}
\Sigma_i=\e(XX^\top\ind{X\in A_i}),\qquad i=1,\ldots,k.
\end{equation}
Note the difference between $\Sigma_i$ and $\Sigma_{\{i\}}$ defined in Section~\ref{sec:prelim}. These are non-negative symmetric matrices, which are also non-negative definite. We will be interested in the largest eigenvalue $\lambda_1(\Sigma_i)$ and the corresponding eigenvector $\hat x_i$ of $\Sigma_i$, assumed to have unit norm: $\norm{\hat x_i}=1$.
There may be more than one such eigenvector, but at least one of them must have non-negative entries by the Perron--Frobenius theorem, and we choose such. Hence we may assume that $\hat x_i\in\Sp^{d-1}$.
If the matrix $\Sigma_i$ is positive or at least irreducible, then such a vector is unique.

The first result forms the basis of the clustering procedure, see~\cite[Lem.\ 3.1]{dhi2001} for the sample version in the case of the spherical $k$-means algorithm.

\begin{proposition}
For $x_1,\ldots,x_k\in\Sp^{d-1}$ consider the partition $(A_i)$ in~\eqref{eq:partition}. Then
\begin{align*}
&\e\left(\max_{i=1}^k x^\top_iX\right)\les\e\left(\max_{i=1}^k \overline x^\top_iX\right),
&\e\left(\max_{i=1}^k (x^\top_iX)^2\right)\les\e\left(\max_{i=1}^k (\hat x^\top_iX)^2\right),
\end{align*}
where $\overline x_i$ are the normalized means defined in~\eqref{eq:xmean} and $\hat x_i$ are the normalized principal eigenvectors of the matrices $\Sigma_i$ defined in~\eqref{eq:sigmai}.
\end{proposition}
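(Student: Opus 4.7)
The plan is to reduce both inequalities to a pointwise comparison on each Voronoi cell $A_i$. By the definition of the partition in \eqref{eq:partition}, on $A_i$ one has $x_i^\top X=\max_j x_j^\top X$. Moreover, since all centroids and $X$ have non-negative entries, $x_j^\top X\geq 0$ for every $j$, so squaring preserves the maximum and $(x_i^\top X)^2=\max_j (x_j^\top X)^2$ on $A_i$ as well. Writing $m_i=\e(X\ind{X\in A_i})$, so that $\overline x_i=m_i/\norm{m_i}$ whenever $m_i\neq 0$, the two left-hand sides in the proposition therefore decompose as
\begin{align*}
\e\left(\max_i x_i^\top X\right)=\sum_i x_i^\top m_i,\qquad \e\left(\max_i (x_i^\top X)^2\right)=\sum_i x_i^\top\Sigma_i x_i.
\end{align*}

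For the right-hand sides I would use the trivial termwise bound $\max_i f_i\geq f_i$, yielding
\begin{align*}
\e\left(\max_i \overline x_i^\top X\right)\geq \sum_i \overline x_i^\top m_i,\qquad \e\left(\max_i (\hat x_i^\top X)^2\right)\geq \sum_i \hat x_i^\top\Sigma_i\hat x_i.
\end{align*}
It then suffices to compare summands. For the linear case, Cauchy--Schwarz together with $\norm{x_i}=1$ gives $x_i^\top m_i\leq \norm{m_i}=\overline x_i^\top m_i$ when $m_i\neq 0$, and both sides are $0$ when $m_i=0$ (using the convention $\overline x_i=x_i$). For the quadratic case, the Rayleigh quotient characterisation of $\lambda_1(\Sigma_i)$ and $\norm{x_i}=1$ yield $x_i^\top\Sigma_i x_i\leq \lambda_1(\Sigma_i)=\hat x_i^\top\Sigma_i\hat x_i$. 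Summing over $i$ concludes both inequalities.

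The only step that needs care, and is thus the main conceptual obstacle, is the identity $(x_i^\top X)^2=\max_j (x_j^\top X)^2$ on $A_i$: it is precisely the non-negativity of the centroids $x_j$ and the support of $X$ being contained in $\Sp^{d-1}$ that allows the linear Voronoi partition \eqref{eq:partition} to remain optimal for the quadratic reward $r(u)=u^2$. Beyond this observation the proof is just Cauchy--Schwarz and the Rayleigh principle, with degenerate cells handled by the convention on $\overline x_i$.
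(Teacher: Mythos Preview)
Your proof is correct and follows essentially the same route as the paper's own argument: decompose both sides over the partition $(A_i)$, bound each summand via Cauchy--Schwarz (linear case) or the Rayleigh characterisation of $\lambda_1$ (quadratic case), and control the right-hand side using that exactly one indicator $\ind{X\in A_i}$ is active. Your explicit remark that non-negativity of $x_j$ and $X$ is what makes $(x_i^\top X)^2=\max_j(x_j^\top X)^2$ on $A_i$ is a point the paper leaves implicit, and your handling of empty cells via the convention $\overline x_i=x_i$ is likewise a small clarification; otherwise the arguments coincide.
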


\begin{proof}
By definition of $A_i$ and $\overline x_i$ we readily obtain
\begin{equation*}
\e\left(\max_{i=1}^k x^\top_iX\right)=\sum_{i=1}^k x^\top_i\e(X\ind{X\in A_i})\les \sum_{i=1}^k \overline x_i^\top\e(X\ind{X\in A_i})\les\e\left(\max_{i=1}^k \overline x^\top_iX\right),
\end{equation*}
where the last inequality follows from the fact that only one term among $X\ind{X\in A_i}$ is non-zero.

By definition of $\Sigma_i$ we have
\begin{equation}
\label{eq:eig_proof}
\e\left(\max_{i=1}^k (x^\top_iX)^2\right)=\sum_{i=1}^k \e(x^\top_iX\ind{X\in A_i})^2= \sum_{i=1}^k x^\top_i\Sigma_ix_i\les\sum_{i=1}^k \hat x^\top_i\Sigma_i\hat x_i= \sum_{i=1}^k \lambda_1(\Sigma_i),
\end{equation}
where we use the standard fact that the quadratic form is maximized under the constraint $\norm{x}=1$ by the principal eigenvector and the maximal value is given by the corresponding eigenvalue, see, e.g.,~\cite[Thm.\ 1]{overton1992sum}. It is left to observe that
\begin{equation}
\label{eq:eig_proof2}
\sum_{i=1}^k \hat x^\top_i\Sigma_i\hat x_i=\sum_{i=1}^k \e(\hat x_i^\top X\ind{X\in A_i})^2\les \e\left(\max_{i=1}^k (\hat x^\top_iX)^2\right),
\end{equation}
since only one term among $X\ind{X\in A_i}$ is non-zero.
\end{proof}

The above result suggests an iterative procedure for finding the optimal centroids where the updates $\overline x_i$, respectively $\hat x_i$, of the current centroids $x_i$ are used. 
This procedure will normally converge to a local maximum for the linear, respectively quadratic, reward function.
By using a number of different starting centroids we may hope to discover the global maximum and thus solve~\eqref{eq:clust} for the dissimilarity functions $c_1$ and $c_2$.
This is exactly the spherical $k$-means procedure when $c_1$ and hence the means $\overline x_i$ are used.

Importantly, instead of centroids we may optimize over partitions of~$\Sp^{d-1}$:

\begin{corollary}[Duality]
\label{cor:dual}
Let $\mathcal P_k$ be the set of partitions of $\Sp^{d-1}$ into $k$ Borel sets. Then
\begin{align*}
\max_{x_1,\ldots,x_k\in\Sp^{d-1}} \e\left(\max_{i=1}^k x_i^\top X\right)&=\max_{(A_1,\ldots A_k)\in\mathcal P_k} \sum_{i=1}^k \norm{\e(X\ind{X\in A_i})},\\
\max_{x_1,\ldots,x_k\in\Sp^{d-1}} \e\left(\max_{i=1}^k (x_i^\top X)^2\right)&=\max_{(A_1,\ldots A_k)\in\mathcal P_k} \sum_{i=1}^k \lambda_1(\Sigma_i),
\end{align*}
where $\Sigma_i$ are defined in~\eqref{eq:sigmai}. Moreover,
\begin{itemize}
\item every optimizer $(x_1,\ldots,x_k)$ yields an optimal partition $(A_1,\ldots,A_k)$ via~\eqref{eq:partition};

\item every optimal partition $(A_1,\ldots,A_k)$ yields an optimal $(x_1,\ldots,x_k)$  given by $x_i=\overline x_i$ in the first case and $x_i=\hat x_i$ in the second.
\end{itemize}
\end{corollary}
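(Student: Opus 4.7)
The plan is to establish each of the two identities by a pair of matching bounds, leveraging the previous Proposition as the main engine. For each case (linear and quadratic), one direction follows from a Voronoi-type decomposition plus an elementary inequality, while the other is essentially contained in the Proposition's proof.

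For the $\leq$ direction, I will fix arbitrary centroids $x_1,\ldots,x_k\in\Sp^{d-1}$ and pass to their Voronoi partition $(A_i)$ from~\eqref{eq:partition}. Evaluating $\e(\max_i x_i^\top X)$ against this partition collapses the maximum into a sum $\sum_i x_i^\top\e(X\ind{X\in A_i})$, which is bounded by $\sum_i\|\e(X\ind{X\in A_i})\|_2$ via Cauchy--Schwarz using $\|x_i\|_2=1$. The quadratic case is the same calculation with the variational bound $x_i^\top\Sigma_i x_i\leq\lambda_1(\Sigma_i)$ replacing Cauchy--Schwarz. Taking the relevant suprema on each side then yields LHS $\leq$ RHS.

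For the reverse direction, I will take an arbitrary partition $(A_i)\in\mathcal P_k$ and use $\overline x_i$ (resp.\ $\hat x_i$) as admissible centroids; both lie in $\Sp^{d-1}$ by construction, the quadratic case relying on Perron--Frobenius. The partition-side objective then rewrites as $\sum_i\e(\overline x_i^\top X\ind{X\in A_i})$, which is bounded by $\e(\max_i\overline x_i^\top X)$ because the indicators sum to one pointwise. For the quadratic case this is precisely the chain~\eqref{eq:eig_proof}--\eqref{eq:eig_proof2} from the Proposition's proof, so no new work is required.

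The two ``moreover'' claims will then drop out by tracing where the inequalities above must be tight: an optimal centroid tuple $(x_i^\ast)$ saturates the first chain, forcing its Voronoi diagram to be an optimal partition; conversely an optimal partition $(A_i^\ast)$ saturates the second chain, forcing $\overline x_i^\ast$ (resp.\ $\hat x_i^\ast$) to be an optimal centroid tuple. No serious obstacle is expected---the argument is essentially bookkeeping around two tight inequalities---but the one point to watch is that, on the RHS, a given partition need not coincide with the Voronoi diagram of its own normalized means, so the two directions must be argued with distinct bounding tools and not collapsed into a single round-trip.
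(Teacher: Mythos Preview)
Your proposal is correct and follows essentially the same route as the paper: both directions reuse the chains~\eqref{eq:eig_proof} and~\eqref{eq:eig_proof2} from the Proposition's proof, and the ``moreover'' claims fall out by tracing tightness. The only cosmetic difference is that the paper starts directly from an optimal centroid tuple and invokes maximality to collapse the chain, whereas you argue the two inequalities for arbitrary centroids/partitions before taking suprema; the content is the same.
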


\begin{proof}
The proof of the two cases is analogous, and we consider only the second case.
Let $x_1,\ldots,x_k\in\Sp^{d-1}$ be an optimal solution to the left-hand side problem, which exists.
By the maximality and~\eqref{eq:eig_proof} we find that the optimal value is $\sum_{i=1}^k \hat x_i^\top\Sigma_i\hat x_i=\sum_{i=1}^k\lambda_1(\Sigma_i)$.
So the supremum over partitions is no smaller.
Take an arbitrary partition $(A_1,\ldots,A_k)\in\mathcal P_k$ with the respective $\hat x_i$
and observe that the associated value cannot exceed the maximum over the centroids, see~\eqref{eq:eig_proof2}.
Hence the supremum over the partitions is achieved and the optimal values coincide.
\end{proof}

\begin{remark}
\label{rem:sigma}
Matrix $\Sigma$ has been used by~\cite{coo2019} and~\cite{dre2019} in their principal component analysis of extremal dependence.
Finding the best direction for $X\in\Sp^{d-1}$ corresponds to minimizing the expected $c_2$ dissimilarity:
\[
\min_{x:\norm{x}=1} \e\left(\norm{X-(x^\top X)x}^2\right)=1-\max_{x:\norm{x}=1} \e(x^\top X)^2= \min_{x:\norm{x}=1} \e\left(c_2(x,X)\right).
\]
In the trivial case of $k=1$ the corresponding centroid $\hat x_1=\lambda_1(\Sigma)$ is the main PCA direction.
No such links seem to exist for $k\ges 2$.
\end{remark}

\subsection{Spherical $k$-principal-components algorithm}
Here we state the algorithm for a discrete distribution putting mass $1/n$ at points $\theta_1,\ldots,\theta_n\in\Sp^{d-1}$, not necessarily distinct. Clearly, this setting includes the empirical law. Only a single iteration is described, since the rest is standard.  R~code is available at \url{https://github.com/jev-ivanovs/spherical_clustering}.

\begin{algo}
Spherical $k$-principal-components clustering -- a single iteration.
\begin{tabbing}
\qquad\enspace\underline{Input}: the sample $\theta_1,\ldots,\theta_n\in\Sp^{d-1}$ and current centroids $x_1,\ldots,x_k\in\Sp^{d-1}$.\\
\qquad\enspace Compute $n\times k$ matrix of dot products $M=(\theta_1,\ldots,\theta_n)^\top(x_1,\ldots,x_k)$\\
\qquad\enspace Let $v$ be the mean of row-wise maxima of $M$\\
\qquad\enspace For each row of $M$ find the index of the (first) maximal value and store them in~$g$\\
\qquad\enspace For $i=1$ to $i=k$\\
	\qquad\qquad Calculate $\Sigma_i=\frac{1}{n}\sum_{u=1}^n (\theta_u\theta_u^\top\ind{g_u=i})$\\
	\qquad\qquad Find the principal eigenvector $\hat x_i\in\Sp^{d-1}$ of $\Sigma_i$\\
\qquad\enspace\underline{Output}: new centroids $\hat x_1,\ldots,\hat x_k\in\Sp^{d-1}$ and the old value $v$.
\end{tabbing}
\end{algo}

It is easy to see that the running time complexity of this algorithm is $O(k(nd+f(d)))$, where $f(d)$ is the complexity of finding the principal eigenvector of $d\times d$ non-negative symmetric matrix, see~\cite{wang2018efficient} for some basic algorithms and recent developments in this field. Assuming that the spectral gap is bounded away from 0 we may take $f(d)=d^2\log d$, making $k$-means and $k$-pc comparable in the common situation when $d\log d\les n$.

\section{Spherical clustering and concomitant extremes}
\label{sec:concom}
\subsection{Problem formulation}
The main goal of this work is to provide some theoretical results supporting clustering for identification of concomitant extremes.
The dissimilarities $c_1$ and $c_2$ are continuous and hence the consistency result~\cite[Prop.\ 3.3]{Janssen_Wan} readily applies to both types of spherical clustering described above.
In words, for high enough threshold $t$ yielding sufficiently many large observations we may expect that clustering of the approximate angles will result in centroids $x_1,\ldots,x_k$ close to the true centroids of the angular distribution, given the latter ones are unique up to a permutation.
Thus, we may focus on clustering of the exact angular distribution, that is, the distribution of $X$. The balancing condition~\eqref{eq:means} will play a crucial role on this way. Some results are still true without this condition and so we stress when it is indeed required.

The basic test scenario is stated in Assumption~\ref{assumption}, and it gives rise to the following question:
\[
\text{Will spherical clustering of $X$ under Assumption~\ref{assumption} produce one centroid in each face?}
\]
We will see that this is not always the case, and our goal is to identify simple interpretable conditions implying such a result. It turns out that spherical $k$-pc is preferable to spherical $k$-means in this setting, since it is more robust and also allows for a substantial theory.

Figure~\ref{fig:ex_clust} illustrates spherical $k$-means clustering in the simple case of $d=3$ and different approximation levels of the angular distribution. Spherical $k$-pc clustering results in exactly the same assignment of all points (blue/green) to the clusters.
The centroids for the two methods are close to each other and also close to the respective faces, and in the case of sampling from the exact law they are, in fact, on the faces $F_{\{1,2\}}$ and $F_{\{3\}}$.

\subsection{Fundamental observations}
\label{sec:counterex}
A partial answer to our problem is given by the following characterization result, which readily follows from the clustering duality in Corollary~\ref{cor:dual}.
The difficult part is in establishing simple sufficient conditions, which we address in Theorem~\ref{thm:means} and Theorem~\ref{thm:eigs}.
Recall the definition of the submodels $X_I$ in Section~\ref{sec:prelim} and that $\mathcal P_k$ is the set of all partitions of $\Sp^{d-1}$ into $k$ Borel sets (or sets of the form~\eqref{eq:partition}).

\begin{proposition}
\label{prop:main}
Under Assumption~\ref{assumption} the following is true.
\begin{itemize}
\item $k$-means: There exist optimal centroids $x_1\in F_{I_1},\ldots,x_k\in F_{I_k}$ iff
\begin{equation}
\label{eq:cond_means}
\sum_{I=I_1,\ldots,I_k} p_I\norm{\e (X_I)}=\max_{(A_1,\ldots,A_k)\in\mathcal P_k} \sum_{i=1}^k \norm{\e(X\ind{X\in A_i})},
\end{equation}
where the left-hand side is $\mu(\sqrt{|I_1|}+\cdots+\sqrt{|I_k|})$ assuming~\eqref{eq:means}.

\item $k$-pc: There exist optimal centroids $x_1\in F_{I_1},\ldots,x_k\in F_{I_k}$ iff
\begin{equation}
\label{eq:cond_eigs}
\sum_{I=I_1,\ldots,I_k} p_I\lambda_1(\Sigma_I)=\max_{(A_1,\ldots,A_k)\in\mathcal P_k} \sum_{i=1}^k \lambda_1\left(\e(XX^\top\ind{X\in A_i})\right).
\end{equation}
\end{itemize}
\end{proposition}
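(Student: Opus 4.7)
The plan is to derive both bullets from Corollary~\ref{cor:dual} by computing, in closed form, the maximum of the clustering objective restricted to face-respecting centroids, and then comparing with the unrestricted optimum. The key structural input is that under Assumption~\ref{assumption} the faces $F_{I_\ell}$ are mutually perpendicular, so $x_i^\top X=0$ whenever $x_i\in F_{I_i}$ and $X\in F_{I_\ell}$ with $\ell\neq i$. Since $X$ is almost surely in $\bigcup_\ell F_{I_\ell}$, the inner maximum collapses to the single index $\ell$ matching the face containing $X$, independently of whether we use the linear or the quadratic reward.

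For the $k$-means case, this observation gives, for any face-respecting centroids,
\begin{equation*}
\e\left(\max_{i=1}^k x_i^\top X\right)=\sum_{\ell=1}^k \e(x_\ell^\top X\,\ind{X\in F_{I_\ell}})=\sum_{\ell=1}^k p_{I_\ell}\,x_\ell^\top \e(X_{I_\ell}),
\end{equation*}
where $\e(X_{I_\ell})$ is viewed as a vector in $\R^d$ supported on $I_\ell$. Cauchy--Schwarz together with $\norm{x_\ell}=1$ upper-bounds this expression by $\sum_\ell p_{I_\ell}\norm{\e(X_{I_\ell})}$, i.e.\ the left-hand side of~\eqref{eq:cond_means}; equality is reached by taking $x_\ell=\e(X_{I_\ell})/\norm{\e(X_{I_\ell})}\in F_{I_\ell}$, which is well-defined because $\e(X_{I_\ell})\neq 0$ (otherwise $X_{I_\ell}$ would vanish a.s., contradicting $\norm{X_{I_\ell}}=1$). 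Hence the LHS of~\eqref{eq:cond_means} is exactly the maximum of the objective over face-respecting centroids, and the maximum is attained.

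The $k$-principal-components case is entirely analogous. The same collapse of the inner maximum yields, for any face-respecting centroids,
\begin{equation*}
\e\left(\max_{i=1}^k (x_i^\top X)^2\right)=\sum_{\ell=1}^k p_{I_\ell}\,x_\ell^\top \Sigma_{I_\ell}x_\ell,
\end{equation*}
with $\Sigma_{I_\ell}$ embedded as a $d\times d$ matrix supported on $I_\ell\times I_\ell$. Each quadratic form is maximized over unit vectors supported on $I_\ell$ at $\lambda_1(\Sigma_{I_\ell})$, the maximizer being the Perron--Frobenius eigenvector, which by non-negativity may be taken in $F_{I_\ell}\cap\Sp^{d-1}$. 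Thus the LHS of~\eqref{eq:cond_eigs} is the maximum of the second objective over face-respecting centroids, again attained.

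By Corollary~\ref{cor:dual}, the RHS of~\eqref{eq:cond_means} and of~\eqref{eq:cond_eigs} coincide with the unrestricted optima of the two objectives. Face-respecting centroids are a special case, so $\mathrm{LHS}\leq \mathrm{RHS}$ in both identities, and equality holds iff a face-respecting choice attains the unrestricted optimum, which is exactly the stated `iff'. The closed-form expression under~\eqref{eq:means} follows because $\e(X_I)$ has all coordinates equal to $\mu_I=\mu/p_I$, giving $p_I\norm{\e(X_I)}=\mu\sqrt{|I|}$. I do not anticipate any serious obstacle; the only minor subtlety is that, should $\e(X_{I_\ell})$ or the Perron eigenvector have some vanishing coordinates in $I_\ell$, the partition induced via~\eqref{eq:partition} may need tie-breaking on a set of angular mass that contributes zero to the value, so the identification of best face-respecting centroids with best face-respecting partitions is not affected.
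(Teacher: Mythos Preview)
Your proof is correct and follows essentially the same approach as the paper: both rely on Corollary~\ref{cor:dual} to identify the right-hand side with the unrestricted optimum, and both use mutual orthogonality of the faces to show that the left-hand side is the value achieved by the face-respecting choice (normalized means or Perron eigenvectors). The only cosmetic difference is that you work on the centroid side of the duality, computing the face-restricted maximum of the reward directly, whereas the paper works on the partition side and argues via reshuffling that the optimal partition induced by face-respecting centroids can be taken to be the faces themselves; these are two presentations of the same argument.
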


\begin{proof}
We focus on $k$-means, since the other result is analogous. Note that the left-hand side of~\eqref{eq:cond_means} is just $\sum_{i=1}^k \norm{\e(X\ind{X\in F_{I_i}})}$ and so by Corollary~\ref{cor:dual}
\[
x_i=\e(X\ind{X\in F_{I_i}})/\norm{\e(X\ind{X\in F_{I_i}})}
\]
yields an optimal set of centroids, since the given disjoint sets $F_{I_i}\cap\Sp^{d-1}$ can be extended to a partition of $\Sp^{d-1}$. These indeed satisfy $x_i\in F_{I_i}$.

Next, suppose that some $x_i\in F_{I_i}$ are optimal, and let $A_i$ be the sets in~\eqref{eq:partition}, which then yield the maximum. The faces are mutually orthogonal and so every vector $y\in A_i$ in the support of $X$ also belongs to $F_{I_i}$, unless $x_i^\top y=0$ for all $i$. The latter vectors can be reshuffled between the associated clusters without changing the cost. So we may restrict $A_i$ to $F_{I_i}$ and the first statement follows. Moreover, assuming~\eqref{eq:means} we have $\e(X_I)=\mu_I(1,\ldots,1)^\top$ and so its norm is $\mu_I\sqrt{|I|}$.

In the case of $k$-pc we also need to observe that the principal eigenvector of $\e(XX^\top\ind{X\in F_{I_i}})$ indeed belongs to~$F_{I_i}$, and the corresponding eigenvalue is $p_{I_i}\lambda_1(\Sigma_{I_i})>0$.
\end{proof}

A simple critical test of any approach is given by the angular distribution corresponding to the asymptotic independence, see case (i) in Section~\ref{sec:prelim}. In this case any partition of indices yields faces satisfying Assumption~\ref{assumption}. We take $k=2$ for simplicity and partition the index set into $I_1=\{1,\ldots,d_1\}$ and $I_2=\{d_1+1,\ldots,d\}$ for some $1\les d_1\les d-1$.
For $k$-means the necessary and sufficient condition~\eqref{eq:cond_means} reads
\[
\sqrt{d_1}+\sqrt{d-d_1}=\max_{\ell=0,\ldots,d} \{\sqrt\ell+\sqrt{d-\ell}\},
\]
where $\ell$ and $d-\ell$ correspond to the alternative partition.
But the right-hand side is maximized at $\ell=d/2$ when $d$ is even and $\ell=(d\pm 1)/2$ when $d$ is odd, and so we must choose index sets of essentially equal size to make faces identifiable using $k$-means, see also  Theorem~\ref{thm:means} below.
In the $k$-pc case we note that $\lambda_1(\e(XX^\top\ind{X\in A_i}))=1/d$ whenever $A_i$ contains at least one standard basis vector.
Thus, the necessary and sufficient condition~\eqref{eq:cond_eigs} always holds for such~$X$.

In the above case the partitioning is arbitrary, but one can always construct another angle $X'$ satisfying the moment constraints and arbitrarily close to $X$ such that a given partition becomes the only correct one (the respective faces support the law of $X'$).
This provides a class of examples where $k$-means clustering fails to identify the supporting faces, because the dissimilarity is continuous, see also~\cite{Janssen_Wan}. Importantly, $k$-pc does not readily fail in this case.

\subsection{Spherical $k$-means in the size-balanced case}
The spherical $k$-means procedure is guaranteed to identify the correct faces only when their dimensions satisfy a certain strict condition.
If this assumption is violated, it is possible to construct a model where $k$-means fails, see the above discussion. 

\begin{theorem}
\label{thm:means}
Suppose Assumption~\ref{assumption} and~\eqref{eq:means} hold.
If $d_1=|I_1|,\ldots,d_k=|I_k|$ yield the maximum in
\[
\max_{\substack{d_1,\ldots,d_k\in\mathbb N_0\\d_1+\cdots+d_k=d}} \sqrt{d_1}+\cdots+\sqrt{d_k},
\]
then there exist optimal centroids $(x_1,\ldots,x_k)$ of spherical $k$-means clustering with $x_i\in F_{I_i}$ for all $i$.
For $k=2$ this condition reads $\big||I_1|-|I_2|\big|\les 1$.
\end{theorem}

\begin{proof}
In view of~\eqref{eq:cond_means} it is only required to show  that
\[
\mu(\sqrt{d_1}+\cdots+\sqrt{d_k})\ges\sum_{i=1}^k \norm{\e(X\ind{X\in A_i})}
\]
for any partition $(A_1,\ldots,A_k)\in\mathcal P_k$. We denote the right-hand side by $\sum_{i=1}^k \norm{\nu_i}$ and note that its maximum over $\nu_i\in\R^d_+$ subject to the constraint $\sum \nu_i=\e(X)=\mu(1,\ldots,1)^\top$ is attained under the assumption $\nu_i\perp\nu_j$ for all $i\neq j$, see Lemma~\ref{lem:triangle}.
Letting $\delta_i$ be the number of non-zero entries in~$\nu_i$ we get an upper bound on the sum of norms:
\[
\mu(\sqrt{\delta_1}+\cdots+\sqrt{\delta_k}),
\]
and the first statement follows.
The case $k=2$ has been discussed above and here the optimal integers $d_1,d_2$ are such that $|d_1-d_2|\les 1$. The proof is complete.
\end{proof}

\section{Spherical $k$-principal-components clustering and concomitant extremes}
\subsection{The main result}
The eigenvalues of the matrices $\Sigma_I$ corresponding to the submodels, see Section~\ref{sec:prelim}, will play a crucial role in the following. When discussing some basic properties of these matrices we write $\Sigma$ and assume that $d\ges 1$.
Also, let $\lambda_i(M)$ denote the $j$th largest eigenvalue of a symmetric matrix~$M$, tacitly assuming $\lambda_i(M)=0$ when $j$ exceeds the order of~$M$.
In the case (i) of asymptotic independence $\Sigma$ is a diagonal matrix with $1/d$ on the diagonal, so that $\lambda_i(\Sigma)=1/d=\mu$. In the case (ii) of complete dependence $\Sigma$ is a matrix with constant elements $1/d$ yielding $\lambda_1(\Sigma)=1,\lambda_2(\Sigma)=0$.
The following lower bound on the largest eigenvalue is true in general.

\begin{lemma}
\label{lem:eigenvalues2}
Consider $\Sigma=\e(XX^\top)$, where $X\in\Sp^{d-1}$ with $d\ges 1$ satisfies~\eqref{eq:means}. Then
\[
\lambda_1(\Sigma)\ges\mu,
\]
and the equality implies asymptotic independence.
\end{lemma}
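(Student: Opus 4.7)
The natural strategy is to evaluate the Rayleigh quotient of $\Sigma$ at a well-chosen test vector. Since the balancing condition~\eqref{eq:means} picks out the direction $\mathbf{1}/\sqrt d$, where $\mathbf{1}=(1,\dots,1)^\top$, this is the obvious candidate. By the variational characterization of the largest eigenvalue,
\[
\lambda_1(\Sigma)\geq \frac{\mathbf{1}^\top\Sigma\mathbf{1}}{d} = \frac{1}{d}\,\e\!\left(\Big(\textstyle\sum_{i=1}^d X_i\Big)^{\!2}\right).
\]

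The key inequality is then the same one used in the proof of Lemma~\ref{lem:mu}: since $X\in \Sp^{d-1}$ has non-negative entries of squared sum~1, we have $S:=\sum_i X_i\geq \|X\|=1$. In particular $S^2\geq S$, so
\[
\frac{1}{d}\,\e(S^2)\geq \frac{1}{d}\,\e(S)=\frac{1}{d}\cdot d\mu=\mu,
\]
which yields $\lambda_1(\Sigma)\geq \mu$.

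For the equality case I would trace the chain backwards. Equality forces $\e(S^2)=\e(S)$, i.e.\ $S^2=S$ a.s., and combined with $S\geq 1$ this gives $S=\sum_i X_i=1$ a.s. Together with $\sum_i X_i^2=\|X\|^2=1$ a.s.\ and $X_i\geq 0$, the identity $\sum_i X_i(1-X_i)=0$ shows that each $X_i\in\{0,1\}$ a.s., so $X$ is almost surely a standard basis vector. The mean condition $\e(X_i)=\mu$ (with $\mu=1/d$ by Lemma~\ref{lem:mu}) then forces $\p(X=e_i)=1/d$ for every $i$, which is exactly case~(i), asymptotic independence.

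I do not expect any serious obstacle here; the only point requiring a little care is making sure both inequalities in the chain degenerate simultaneously in the equality analysis, but since the Rayleigh bound is automatically tight once $\e(S^2)=\mu d$ (any unit vector then achieves the maximum for the resulting multiple-of-identity $\Sigma$), this is immediate.
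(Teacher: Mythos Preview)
Your argument is correct and proceeds along the same lines as the paper's: both bound $\lambda_1(\Sigma)$ from below via the Rayleigh quotient at $\mathbf{1}/\sqrt d$ and ultimately rely on $\sum_i X_i\geq 1$. The only difference is packaging --- the paper writes $\Sigma=\Sigma_0+\mu^2\mathbf{1}\mathbf{1}^\top$ to obtain $\lambda_1(\Sigma)\geq d\mu^2$ and then appeals to Lemma~\ref{lem:mu} for both the step $d\mu^2\geq\mu$ and the equality case, whereas you use $\e(S^2)\geq \e(S)$ directly and spell out the equality characterization by hand.
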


\begin{proof}
Consider the decomposition $\Sigma=\Sigma_0+\mu^2\boldsymbol{1}\boldsymbol{1}^\top$, where $\Sigma_0$ is the respective covariance matrix and $\boldsymbol{1}$ is a vector of ones.
All three matrices are non-negative definite, and the eigenvalues of the latter are $d\mu^2,0,\ldots,0$.
Next, we use the standard inequality: $\lambda_1(\Sigma)\ges \lambda_1(\mu^2\boldsymbol{1}\boldsymbol{1}^\top)=d\mu^2$, which follows from the interpretation of $\lambda_1$ as the maximum over quadratic forms, for example.
Apply Lemma~\ref{lem:mu} to conclude.
\end{proof}

We are now ready to state our main result providing some basic theory supporting the use of clustering in detection of groups of concomitant extremes.

\begin{theorem}
\label{thm:eigs}
Suppose Assumption~\ref{assumption} holds. If
\begin{equation}
\label{eq:minmax}
\min_{I=I_1,\ldots,I_k} p_I\lambda_1(\Sigma_I)\ges\max_{I=I_1,\ldots,I_k} p_I\lambda_2(\Sigma_I),
\end{equation}
then there exist optimal centroids $(x_1,\ldots,x_k)$ of spherical $k$-principal-components clustering such that $x_i\in F_{I_i}$ for all $i$.
Moreover, this condition is satisfied when~\eqref{eq:means} holds and
\begin{equation}
\label{eq:lambda2}
\lambda_2(\Sigma_I)\les\mu_I\qquad I=I_1,\ldots,I_k.
\end{equation}
\end{theorem}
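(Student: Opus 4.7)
The plan is to invoke Proposition~\ref{prop:main}, which reduces the first assertion to verifying~\eqref{eq:cond_eigs}. Extending the disjoint sets $F_{I_i}\cap\Sp^{d-1}$ to a partition in $\mathcal{P}_k$ shows that the value $\sum_{I}p_I\lambda_1(\Sigma_I)$ is achieved by this particular choice, so the real task is the reverse inequality
\[
\sum_{i=1}^k \lambda_1\!\bigl(\e(XX^\top\ind{X\in A_i})\bigr)\leq\sum_{I=I_1,\ldots,I_k} p_I\lambda_1(\Sigma_I)
\]
for every $(A_1,\ldots,A_k)\in\mathcal{P}_k$.

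The structural starting point I would exploit is that, because the faces $F_{I_j}$ are mutually orthogonal and support $X$, each matrix $\Sigma_i:=\e(XX^\top\ind{X\in A_i})$ is block-diagonal with respect to $(I_1,\ldots,I_k)$; writing its $j$-th block as $\Sigma_i^{(j)}$, we have $\lambda_1(\Sigma_i)=\max_j\lambda_1(\Sigma_i^{(j)})$ and $\sum_i\Sigma_i^{(j)}=p_{I_j}\Sigma_{I_j}$. I would then, for each $i$, select an index $j(i)$ attaining this maximum, set $n_j=|\{i:j(i)=j\}|$, and regroup $\sum_i\lambda_1(\Sigma_i)=\sum_j\sum_{i:\,j(i)=j}\lambda_1(\Sigma_i^{(j)})$.

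The main technical step, which I expect to be the hardest part, is the eigenvalue inequality
\[
\sum_{\ell=1}^m\lambda_1(N_\ell)\leq\lambda_1(N)+(m-1)\lambda_2(N)
\]
valid for positive semi-definite matrices $N_1,\ldots,N_m$ with $N_1+\cdots+N_m=N$. The intended proof passes to the span $V$ of the top unit eigenvectors of the $N_\ell$, of dimension $r\leq m$: since each such top eigenvector lies in $V$, the compressions $U^\top N_\ell U$ (for an orthonormal basis $U$ of $V$) preserve $\lambda_1$, so $\sum\lambda_1(N_\ell)=\sum\lambda_1(U^\top N_\ell U)\leq\sum\mathrm{trace}(U^\top N_\ell U)=\mathrm{trace}(U^\top N U)=\sum_{i=1}^r\lambda_i(U^\top N U)$, and Cauchy's interlacing theorem for compressions closes the argument with $\sum_{i=1}^r\lambda_i(U^\top N U)\leq\sum_{i=1}^r\lambda_i(N)\leq\lambda_1(N)+(m-1)\lambda_2(N)$.

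Applied on each face $F_{I_j}$ with $n_j\geq 1$, $N_\ell=\Sigma_i^{(j)}$ for $i$ with $j(i)=j$ and $N=\sum_\ell N_\ell\preceq p_{I_j}\Sigma_{I_j}$ (Loewner), combined with the monotonicity $\lambda_i(N)\leq p_{I_j}\lambda_i(\Sigma_{I_j})$, the above bound yields $\sum_{i:j(i)=j}\lambda_1(\Sigma_i^{(j)})\leq p_{I_j}\lambda_1(\Sigma_{I_j})+(n_j-1)p_{I_j}\lambda_2(\Sigma_{I_j})$. Summing over $j$ and invoking the combinatorial identity $\sum_{j:n_j\geq 1}(n_j-1)=|\{j:n_j=0\}|$ together with~\eqref{eq:minmax} (which bounds every $p_{I}\lambda_2(\Sigma_{I})$ by every $p_{I'}\lambda_1(\Sigma_{I'})$) gives $\sum_{j:n_j\geq 1}(n_j-1)p_{I_j}\lambda_2(\Sigma_{I_j})\leq\sum_{j:n_j=0}p_{I_j}\lambda_1(\Sigma_{I_j})$, which rearranges to the desired inequality. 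The ``moreover'' claim is then immediate: Lemma~\ref{lem:eigenvalues2} applied to $X_I$ gives $\lambda_1(\Sigma_I)\geq\mu_I$, and~\eqref{eq:lambda2} gives $\lambda_2(\Sigma_I)\leq\mu_I$, so $p_I\lambda_1(\Sigma_I)\geq p_I\mu_I=\mu\geq p_I\lambda_2(\Sigma_I)$ for every $I$, which is exactly~\eqref{eq:minmax}.
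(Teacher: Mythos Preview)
Your proof is correct, and the ``moreover'' part is handled exactly as in the paper. The route to the main inequality, however, is organized differently. The paper applies a single global bound (Lemma~\ref{lemma:eigenvalues}): for positive semi-definite $M_1,\ldots,M_k$ with sum $M$, one has $\sum_i\lambda_1(M_i)\leq\sum_{i=1}^k\lambda_i(M)$, proved by Gram--Schmidt on the top eigenvectors and a quadratic-form computation; the block-diagonal structure of $\Sigma$ together with~\eqref{eq:minmax} then forces the top-$k$ eigenvalues of $\Sigma$ to be exactly the $p_{I_j}\lambda_1(\Sigma_{I_j})$. You instead exploit the block structure first and work face by face, using the weaker bound $\sum_{\ell=1}^m\lambda_1(N_\ell)\leq\lambda_1(N)+(m-1)\lambda_2(N)$. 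Your proof of this bound via compression to the span of the top eigenvectors and Cauchy interlacing is clean and arguably more standard than the paper's Gram--Schmidt manipulation; indeed your intermediate inequality $\sum_\ell\lambda_1(N_\ell)\leq\sum_{i=1}^r\lambda_i(N)$ already contains the paper's lemma. The price you pay for localizing is the final counting step matching the ``excess'' $(n_j-1)$ terms against the unused faces, which the global approach avoids. Both arguments ultimately rest on the same Ky~Fan-type inequality, just applied at different levels of the block decomposition.
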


Importantly, this sufficient condition asserts that the second principal direction for any face provides a smaller cost reduction than the principal direction of any other face taking the respective probability weights into account. That is, the problem is balanced in this sense. Moreover, the above condition is implied when $\mu_I$ separates the first two eigenvalues of $\Sigma_I$ for all submodels $I$, see Lemma~\ref{lem:eigenvalues2}. This is true, for example, for certain symmetric models, see Lemma~\ref{lem:symmetric} below, irrespective of the dimension.

\begin{proof}[Proof of Theorem~\ref{thm:eigs}]
Let $D_i=p_{I_i}\Sigma_{I_i}$ be the diagonal blocks of $\Sigma$.
In view of~\eqref{eq:cond_eigs}, it is only required to show for any partition $(A_1,\ldots, A_k)\in\mathcal P_k$ that
\[
\sum_{i=1}^k \lambda_1(D_i)\ges\sum_{i=1}^k \lambda_1(\Sigma_i),
\]
where $\Sigma_i=\e(XX^\top\ind{X\in A_i})$.
Lemma~\ref{lemma:eigenvalues} in the Appendix is crucial here, and it shows that the right-hand side is upper bounded by $\sum_{i=1}^k \lambda_i(\Sigma)$. Thus it is left to show that
$\sum_{i=1}^k \lambda_1(D_i)\ges\sum_{i=1}^k \lambda_i(\Sigma)$, which is indeed equivalent to the stated assumption, because of the block diagonal structure of $\Sigma=\diag(D_1,\ldots,D_k)$.

Now suppose that~\eqref{eq:means} and~\eqref{eq:lambda2} are in place. Then $\max_I \{p_I\lambda_2(\Sigma_I)\}\les\mu\les\min_I \{p_I\lambda_1(\Sigma_I)\}$, where we apply Lemma~\ref{lem:eigenvalues2} to each $\Sigma_I$.
\end{proof}

\subsection{Simple sufficient conditions}
Let us discuss the condition in~\eqref{eq:lambda2} for a fixed~$I$. Firstly, it is always true for a one- or two-dimensional face, where in the former case we tacitly assume that $\lambda_2=0$.

\begin{lemma}
Assume~\eqref{eq:means} and $d=2$. Then $\lambda_2(\Sigma)\les\mu$.
\end{lemma}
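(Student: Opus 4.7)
The plan is to exploit that in dimension $d=2$ the trace of $\Sigma$ equals $1$, so the two eigenvalues sum to $1$, and then to bootstrap the already-established lower bound on $\lambda_1$ into an upper bound on $\lambda_2$.

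Concretely, I would proceed as follows. First, since $\|X\|_2=1$ almost surely, one has
\[
\mathrm{tr}(\Sigma)=\e(X_1^2)+\e(X_2^2)=\e\|X\|_2^2=1,
\]
hence $\lambda_1(\Sigma)+\lambda_2(\Sigma)=1$, i.e.\ $\lambda_2(\Sigma)=1-\lambda_1(\Sigma)$. Second, Lemma~\ref{lem:eigenvalues2} (applied to our $2$-dimensional $X$) yields $\lambda_1(\Sigma)\ge\mu$. Third, Lemma~\ref{lem:mu} gives $\mu\ge 1/d=1/2$ for $d=2$. Chaining these three facts,
\[
\lambda_2(\Sigma)=1-\lambda_1(\Sigma)\le 1-\mu\le 1-\tfrac{1}{2}=\tfrac{1}{2}\le\mu,
\]
which is the desired inequality.

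There is essentially no obstacle here: the statement is a short corollary of the two preceding lemmas combined with the unit-trace identity specific to $d=2$. The only thing worth flagging is that the argument is genuinely two-dimensional, since in higher dimensions the trace equals $1$ as well but one can no longer conclude $\lambda_2\le\mu$ from just $\lambda_1\ge\mu$ (the remaining eigenvalues share the mass $1-\lambda_1$, rather than $\lambda_2$ absorbing all of it). Thus the proof should simply collect the three ingredients above in a couple of lines without further work.
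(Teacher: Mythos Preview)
Your proof is correct and essentially identical to the paper's: both use $\mathrm{tr}\,\Sigma=1$ to get $\lambda_2=1-\lambda_1$, then combine $\lambda_1\ge\mu$ (Lemma~\ref{lem:eigenvalues2}) with $\mu\ge 1/2$ (Lemma~\ref{lem:mu}) to conclude $\lambda_2\le 1-\mu\le\mu$.
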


\begin{proof}
We have $\lambda_1+\lambda_2=\mathrm{tr}\Sigma=1$, and $\lambda_1\ges\mu$ by Lemma~\ref{lemma:eigenvalues}. By Lemma~\ref{lem:mu}, $\mu\ges 1/2$, and so $\lambda_2=1-\lambda_1\les 1-\mu\les\mu$, which completes the proof.
\end{proof}

Secondly, certain symmetries imply this condition as, for example, invariance of the second moments under permutations.

\begin{lemma}
\label{lem:symmetric}
Assume~\eqref{eq:means} and $\e(X_iX_j)=\e(X_{\pi(i)}X_{\pi(j)})$ for all $i,j$ and all permutations $\pi$. Then $\lambda_2(\Sigma)\les 1/d\les\mu$.
\end{lemma}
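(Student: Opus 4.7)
The plan is to exploit the full permutation symmetry to reduce $\Sigma$ to a very simple two-parameter matrix whose spectrum can be written down by inspection.

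First, I would observe that the hypothesis $\mathbb{E}(X_i X_j) = \mathbb{E}(X_{\pi(i)} X_{\pi(j)})$ for every permutation $\pi$ forces $\Sigma = \mathbb{E}(XX^\top)$ to have all diagonal entries equal to some common value $a$ and all off-diagonal entries equal to some common value $b$. Since $X \in \mathbb{S}_+^{d-1}$ has $\|X\|_2 = 1$ almost surely, the diagonal sum is $\mathrm{tr}\,\Sigma = \mathbb{E}\|X\|_2^2 = 1$, so $a = 1/d$. Moreover, $b = \mathbb{E}(X_i X_j) \geq 0$ because $X$ has non-negative components.

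Next, I would write
\[
\Sigma = (a - b)\,I + b\,\boldsymbol{1}\boldsymbol{1}^\top,
\]
from which the eigenvalues can be read off directly: one eigenvalue equals $a + (d-1)b$, attained by the eigenvector $\boldsymbol{1}/\sqrt{d}$, while the remaining $d-1$ eigenvalues all equal $a - b$, corresponding to the orthogonal complement of $\boldsymbol{1}$. Since $b \geq 0$, the largest of these two values is $a + (d-1)b$, so $\lambda_2(\Sigma) = a - b \leq a = 1/d$.

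Finally, the second inequality $1/d \leq \mu$ is immediate from Lemma~\ref{lem:mu}, which already gives the lower bound $\mu \geq 1/d$ for any angle $X$ satisfying~\eqref{eq:means}. There is no real obstacle here; the only subtlety is recognising that nonnegativity of the components of $X$ is what forces $b \geq 0$ and hence pins down which of the two eigenvalues is the largest.
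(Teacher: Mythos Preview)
Your proof is correct and follows essentially the same route as the paper: identify $\Sigma$ as having diagonal entries $1/d$ and a common off-diagonal entry $c\geq 0$, read off the eigenvalues $1/d+(d-1)c$ and $1/d-c$, and invoke Lemma~\ref{lem:mu} for the bound $1/d\leq\mu$. Your explicit decomposition $\Sigma=(a-b)I+b\,\boldsymbol{1}\boldsymbol{1}^\top$ just makes the eigenvalue computation slightly more transparent than the paper's ``it is not difficult to check''.
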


\begin{proof}
Let $c=\e(X_1X_2)\ges 0$ be the common cross-moment. Observe from $\norm{X}=1$ that $\e(X_1^2)=1/d$, so that $\Sigma$ is a matrix with diagonal elements $1/d$ and off-diagonal~$c$.
It is not difficult to check that $\lambda_1=1/d+(d-1)c$ and the other eigenvalues are all equal to $1/d-c$. The proof is complete in view of Lemma~\ref{lem:mu}.
\end{proof}

\subsection{Counterexamples}
Basic counterexamples to successful identification of faces are obtained by considering more groups of (almost) concomitant extremes than there are clusters.
Assume that the index set can be partitioned into 3 sets $I_1,I_2,I_3$, but we use $k=2$ with the partition $I_1$ and $I_2\cup I_3$.
Write $\Sigma=\diag(D_1,D_2,D_3)$ and suppose that the leading eigenvalues satisfy
\begin{equation}
\label{eq:ass_counter}
\lambda_1(D_1)<\lambda_1(D_2)\les\lambda_1(D_3).
\end{equation}
Now we readily find that an alternative partition $I_1\cup I_2$ and $I_3$ produces a strictly larger value:
\[
\lambda_1(D_1)+\lambda_1(\diag(D_2,D_3))<\lambda_1(\diag(D_1,D_2))+\lambda_1(D_3),
\]
showing that $k$-pc will fail to identify the faces $F_{I_1}$ and $F_{I_2\cup I_3}$, see Proposition~\ref{prop:main} and~\eqref{eq:Sigma}.
Of course, the failure is due to the wrong grouping of the three faces, but by continuity of the dissimilarity function we can perturb the model by introducing some small mass on $F_{I_2\cup I_3}$ without drastically changing the location of the optimal centroids, but making the partition $I_1$ and $I_2\cup I_3$ the only possibility.

An example with the property~\eqref{eq:ass_counter} can be produced by taking $|I_1|=1$ and $|I_2|,|I_3|>1$ with the latter two submodels being asymptotically dependent, so that $\lambda_1(D_1)=\mu<\lambda_1(D_2)\wedge\lambda_1(D_3)$ according to Lemma~\ref{lem:eigenvalues2}. Examples with $|I_1|>1$ can be obtained by imposing higher dependence in the subsets $I_2$ and~$I_3$.

\section{Numerical experiments}
\label{sec:numerics}
\subsection{The simulation framework}
Firstly, we consider a $d=100$ dimensional model satisfying Assumption~\ref{assumption} with $k=2$ faces.
The $k$-means and $k$-pc algorithms are compared by attempting to associate each pair of centroids with the two faces and computing certain scores.
Secondly, we consider a more practical situation where the groups are not disjoint and the number of groups $k$ must be inferred from data.
Finally, we apply both clustering algorithms to two real world datasets.

Here we briefly describe our basic model used in simulations.
The random vector $Y$ is taken to have a $d$-variate max-stable H\"usler--Reiss distribution introduced in~\cite{Husler1989} and further studied in~\cite{Engelke2015,eng2018}. This popular family has a good control of pairwise extremal dependencies which are encoded into the associated variogram matrix $\Gamma$ parametrizing the distribution.
The variogram satisfying Assumption~\ref{assumption} with $k=2$ faces of dimensions $d_1$ and $d_2$ is produced randomly in a certain way detailed in the Supplementary Material.
The pairwise tail dependence coefficients are likely to be very small in our parameter generation procedure.
This may lead to a subdivision of a group of concomitant extremes into almost independent subgroups making the detection problem harder, see Section~\ref{sec:concom}.

In each experiment we sample $10^4$ i.i.d.\ realizations of~$Y$ using the R package~\cite{graphicalExtremes}. We take $10\%$ of these vectors having the largest Euclidean norm and thus form $10^3$ approximate realizations of the angle~$X$. An example of the estimated matrix $(\chi_{ij})$ appears in Figure~\ref{fig:nonorthogonal_model} (left). The mean is $0.21$  for asymptotically dependent $i\neq j$ and $0.1$ for the pairs with asymptotic independence.

One way to obtain faces from the centroids $x\in\Sp^{d-1}$ is to use a simple truncation procedure: $I=\{i:x_i>\delta\}$ for a chosen level $\delta>0$. Another way is to find the index set $I$ with the minimal cardinality and such that the angle with the respective face is sufficiently small: $\sphericalangle(x,F_I)<\v$, where
\begin{equation}
\label{eq:angleF}
\sphericalangle(x,F_I)=\min\{\sphericalangle(x,y):y\in F_I\cap\Sp^{d-1}\}= \frac{2}{\pi}\arccos\big[\big(\sum_{i\in I} x^2_i\big)^{1/2}\big]\in [0,1],
\end{equation}
see also~\eqref{eq:angle}.
The last equality follows from the fact that the minimizer $y$ is given by the normalized vector $(x_i\ind{i\in I})_{i=1}^d$. Finding such $I$ is simple, since we only need to choose enough indices corresponding to the largest~$x_i$'s. Note that by doing so we always pick a face yielding the smallest angle with~$x$ among all faces of the same dimension. This latter approach is more consistent with the spherical clustering paradigm, and so we use it throughout our experiments. In fact, any dissimilarity in Section~\ref{sec:clustering} would produce the same results up to changing the threshold appropriately.

\subsection{Comparison of $k$-means and $k$-pc}
\begin{figure}[!ht]
\centering
\includegraphics[width=0.3\textwidth]{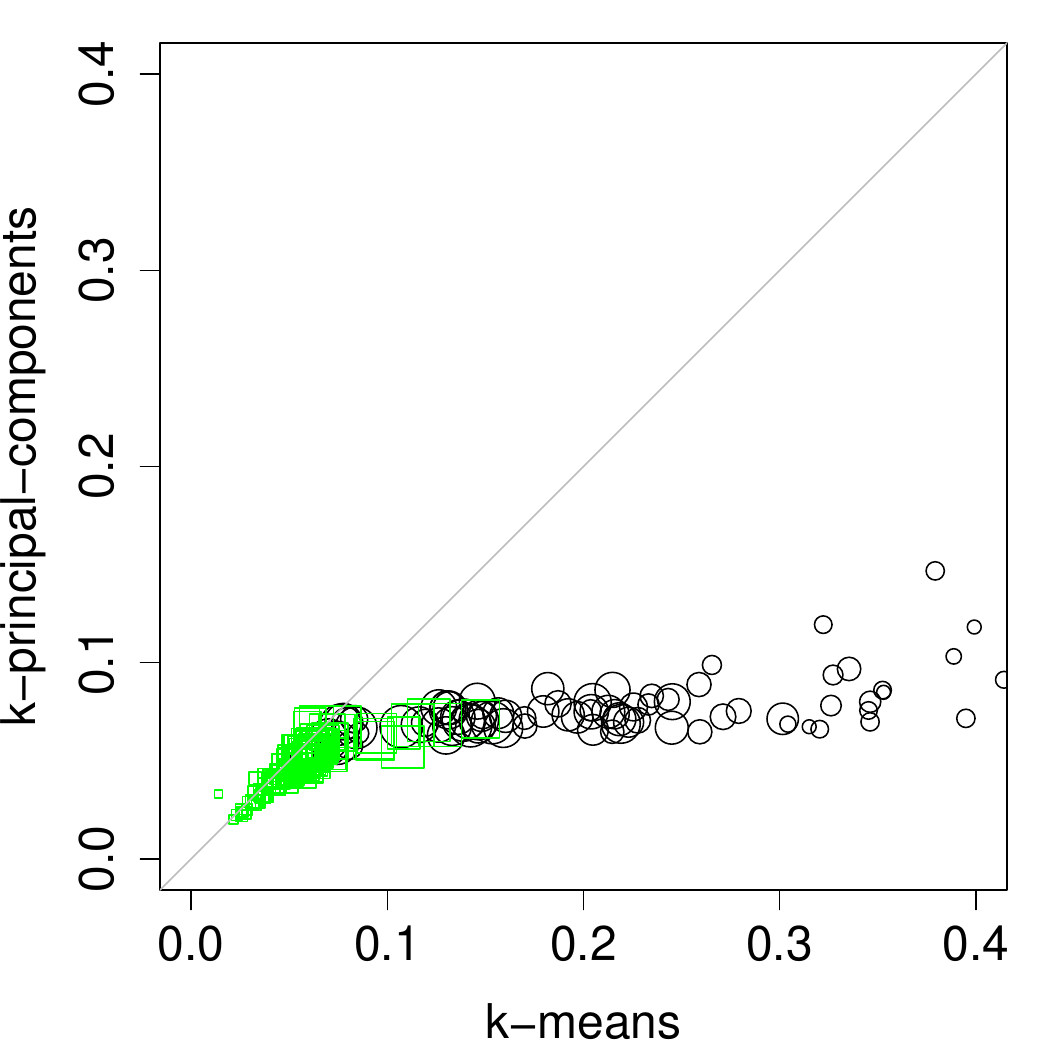}\qquad
\includegraphics[width=0.3\textwidth]{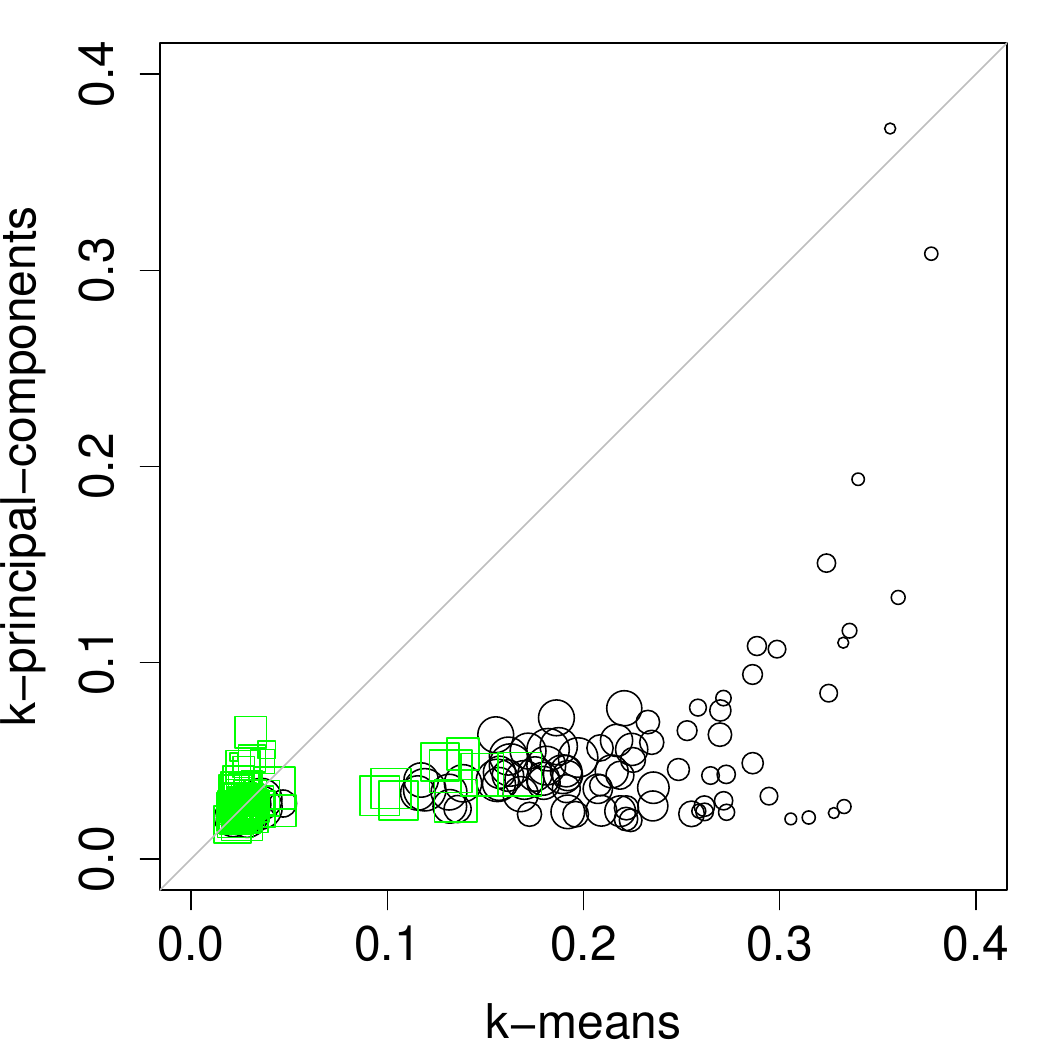}
\caption{Left: angles to the corresponding faces. Right: maximal entries over $I^c$. Black circles correspond to the first face, green squares to the second, and scale to~$d_1$.}
\label{fig:compar}
\end{figure}

We replicate the following procedure $100$ times: we choose $d_1\in\{1,\ldots,50\}$ uniformly at random and generate a variogram~$\Gamma$ corresponding to the partition $I_1=\{1,\ldots,d_1\}$ and $I_2=\{d_1+1,\ldots,100\}$. Then we produce $10^3$ approximate realizations of the angle~$X$, apply both the $k$-means and $k$-pc algorithms using $k=2$ and 100 random restarts in each, and associate each pair of centroids with the two faces.
For each centroid $x$ we calculate the angle to the corresponding face $\sphericalangle(x,F_I)$ defined in~\eqref{eq:angleF} and the maximal entry $\max\{x_i:i\in I^c\}$ over the indices defining the other face. That is, a small number indicates that the centroid is indeed close to its face, see Figure~\ref{fig:compar} where 19 and 8 black points are outside the plot range, respectively.

We call it an error when a centroid yields the angle and the maximal entry both exceeding~0.1.
Out of 200 trials (faces) there are approximately $46\%$ errors for $k$-means and only $8\%$ for $k$-pc, and all of the latter are also the errors for the $k$-means procedure.
By increasing the threshold to 0.2 we get $26\%$ and $5\%$ of errors, and again all the errors in the latter are also errors in the former. Furthermore, these correspond to the first face and occur when it is relatively small.
We also check that in all these cases permuting the assignment to faces does not resolve the issue.

In the above regime of weak asymptotic dependence the $k$-pc method outperforms $k$-means, which is also expected in view of the above developed theory and accompanying intuition.
In the case of increased dependence within the faces both clustering methods perform extremely well, making identification of the corresponding faces an obvious task in this simplistic setting.

\subsection{Non-orthogonal faces and the choice of $k$}
Recall that Assumption~\ref{assumption} is required for our theoretical guarantees alone.
Here we consider a typical scenario where the groups of concomitant extremes have common elements and the number of groups $k$ must be inferred from data.
We mix two datasets: half of the observations come from the above (randomly sampled) H\"usler--Reiss model with two groups $\{1,\ldots, 40\},\{61,\ldots,100\}$ and half from an analogous model with two groups $\{21,\ldots,60\},\{1,\ldots,20,61,\ldots,100\}$.
These are the underlying four faces to be identified.
The estimated matrix of tail dependence coefficients $(\chi_{ij})$ is presented in Figure~\ref{fig:nonorthogonal_model} (left).
This image may give a false feeling that identification of groups is easy, and so we reorder the indices according to the standard heatmap routine in R based on hierarchical clustering, see Figure~\ref{fig:nonorthogonal_model} (centre).
Now the groups cannot be easily seen with a naked eye.

\begin{figure}[!ht]
\centering
\includegraphics[clip, trim=0 0 20 0, height=0.3\textwidth]{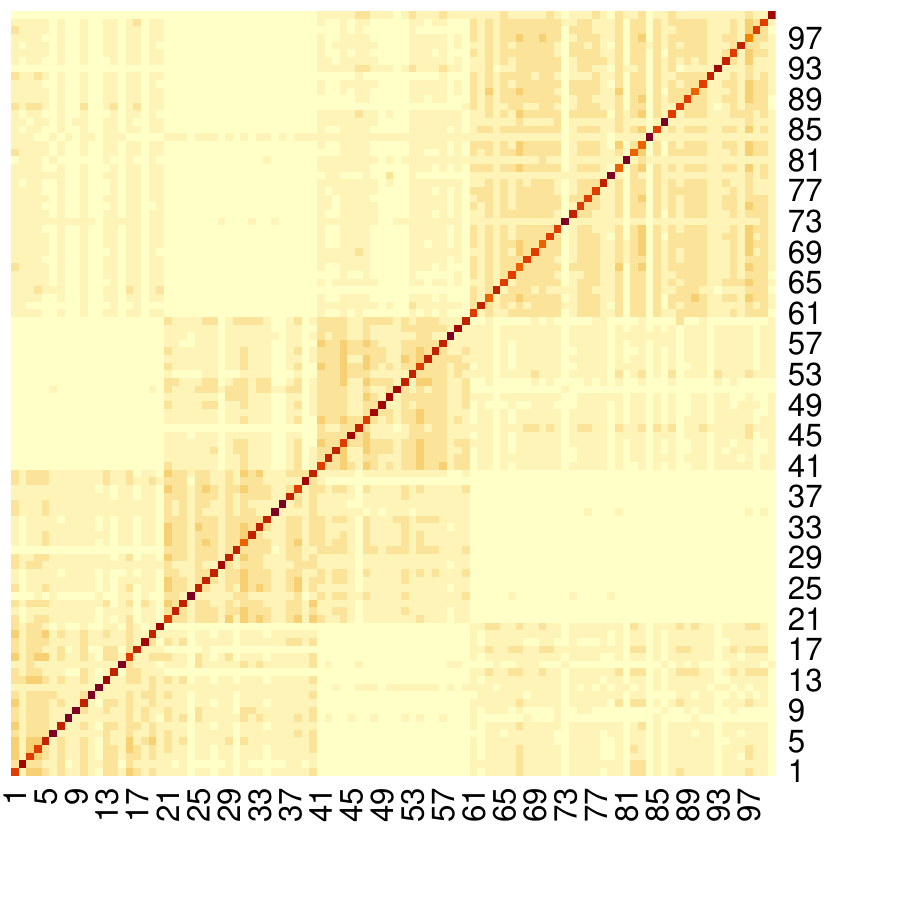}
\includegraphics[clip, trim=0 10 0 85, height=0.3\textwidth]{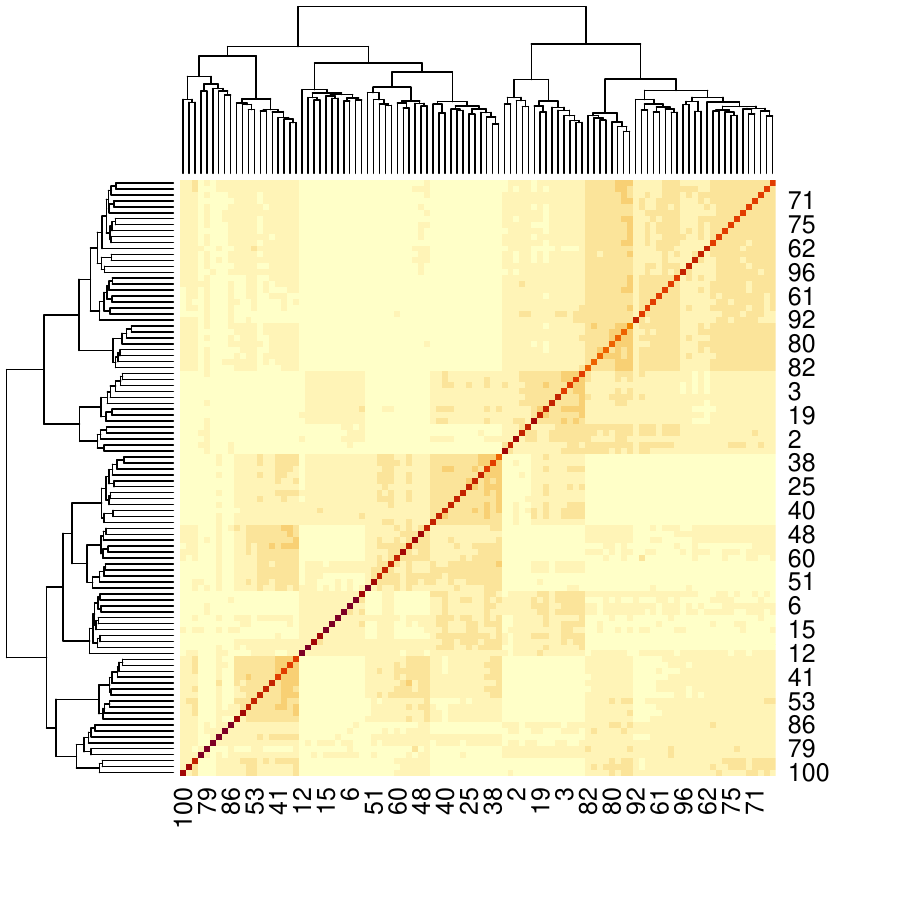}
\includegraphics[clip, trim=30 20 50 70, width=0.3\textwidth]{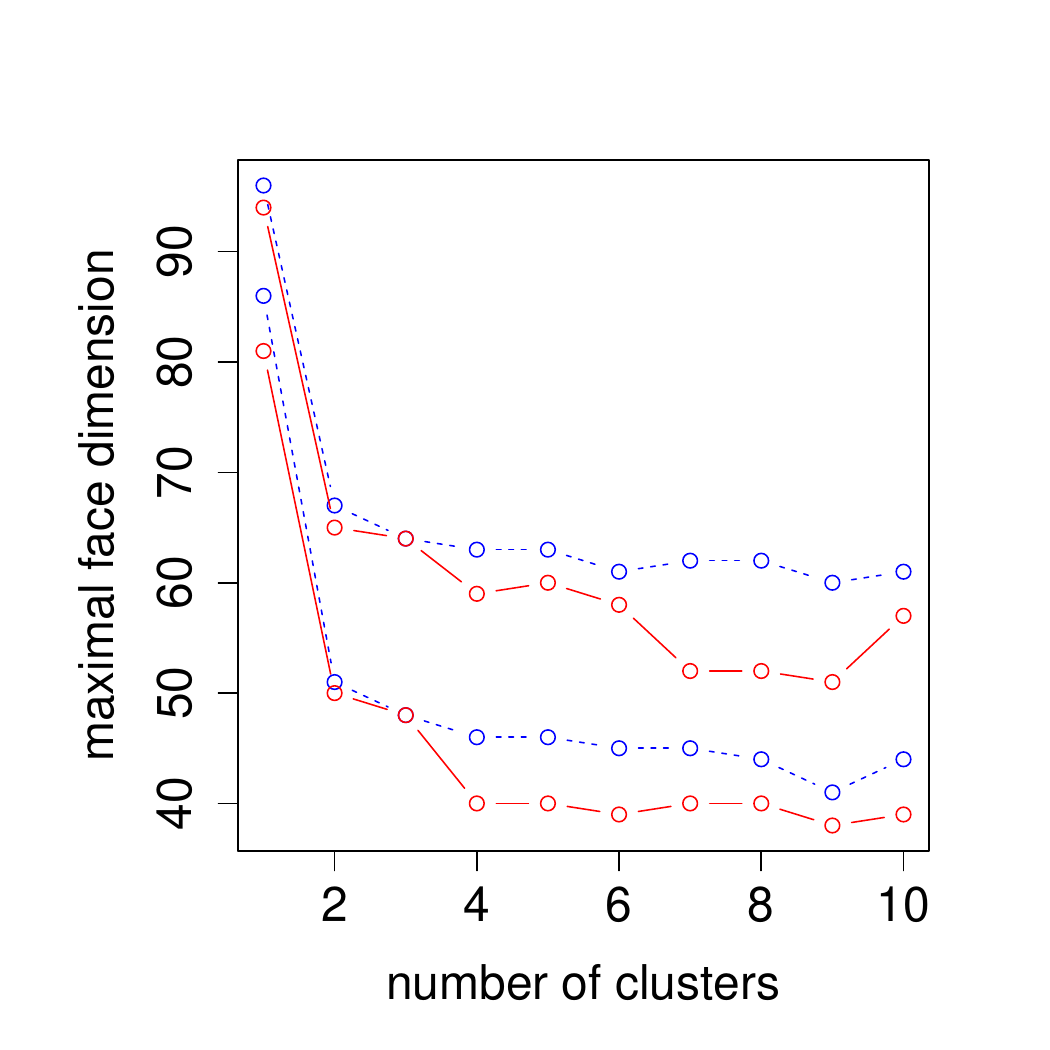}
\caption{Non-orthogonal faces: estimated $(\chi_{ij})$, its heatmap, and maximal face dimension for $\v\in\{1/10,1/5\}$  with $k$-pc in solid red. }
\label{fig:nonorthogonal_model}
\end{figure}

We run $k$-means and $k$-pc for $k=1,\ldots,10$ and use $500$ restarts in each (much fewer would be sufficient for smaller~$k$).
The standard `elbow plot' of the cost as a function of $k$ does not reveal a clear candidate for the number of clusters, and so is omitted.
Importantly, our final goal is not to cluster the points but rather to determine the groups of concomitant extremes, hopefully leading to a sparse model.
Thus, instead of plotting the cost function, we plot the maximal face dimension in Figure~\ref{fig:nonorthogonal_model} (right) for two (angular) thresholds $\v\in\{1/10,1/5\}$,
which does suggest $k=4$ as an adequate candidate when using $k$-pc (in red). Choosing the right $k$ is difficult in general, and one may look at various statistics depending on the problem and goals of the study.
Providing some theory to aid this choice is an important open question. 

Finally, we present the detected $4$ faces in Figure~\ref{fig:nonorthogonal}, where the same two thresholds are used. Note that smaller threshold leads to larger faces and we depict additional indices by light colors.
Here and below the thresholds are chosen so that the recovered faces correspond to a sparse problem while (almost) all indices are contained in some group.
In Figure~\ref{fig:nonorthogonal}, the centroids are ordered so that the respective faces can be associated with $\{1,\ldots,40\}$, $\{21,\ldots,60\},\{1,\ldots,20,61,\ldots,100\}$ and $\{41,\ldots,100\}$.
Both methods produce relatively good results with $k$-pc being visually a bit better.
Choosing an appropriate threshold is non-trivial and this choice may depend on the ability to cope with faces of large dimension in a subsequent modelling step.
Larger threshold values result in smaller faces.
Some indices may not appear in any group indicating their high level of asymptotic independence from the rest.
In practice, it may be reasonable to first establish the main directions/prototypes of extremes and then treat the remaining components in some way.
Finally, thresholding of centroids is just one possibility to define faces, and many others exist~\citep{cha2015}.

\begin{figure}[!ht]
\centering
\includegraphics[clip, trim=20 40 0 50, width=1\textwidth]{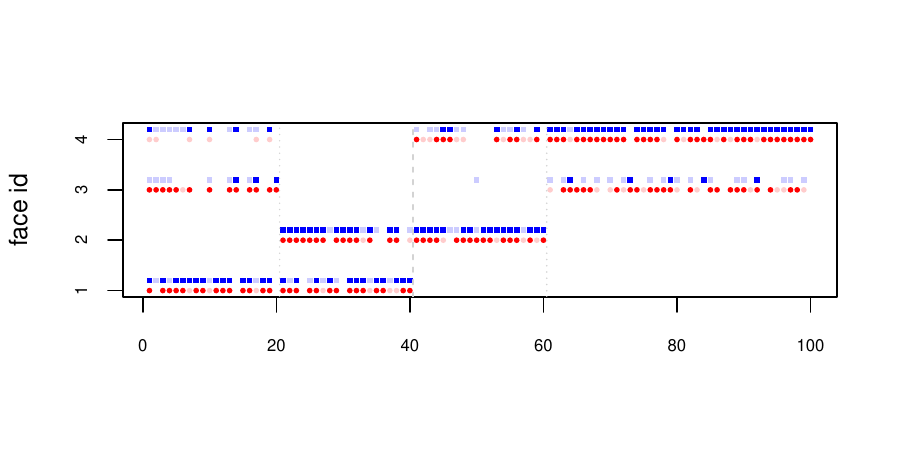}
\caption{Detection of $4$ non-orthogonal faces using $k$-means (blue) and $k$-pc (red) with $\v=1/5$ (dark colours) and $\v=1/10$ (both intensities).}
\label{fig:nonorthogonal}
\end{figure}

\subsection{River discharges}
\label{subsection_river_discharges}
We illustrate the two clustering approaches using river discharges at $d=68$ locations in Switzerland, see the review paper~\cite{eng2020_review} for a detailed description of this dataset.
The data is standardized as described in Section~\ref{sec:prelim} and then $10\%$ of observations are used to approximate the angular distribution, resulting in 202 samples. Unlike in the simulation experiments above, here we have a rather strong `asymptotic' dependence between various components, see Figure~\ref{fig:elbow} (left).


\begin{figure}[!ht]
\centering
\includegraphics[clip, trim=0 10 0 85, height=0.3\textwidth]{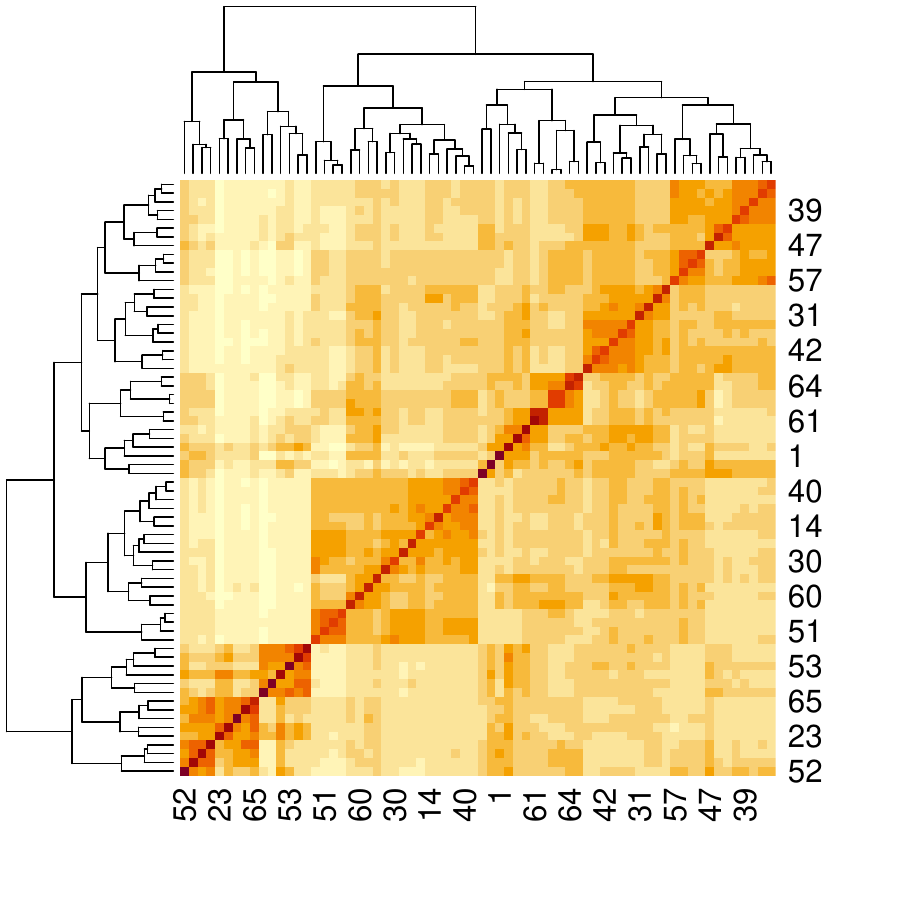}
\includegraphics[clip, trim=50 20 30 50, width=0.3\textwidth]{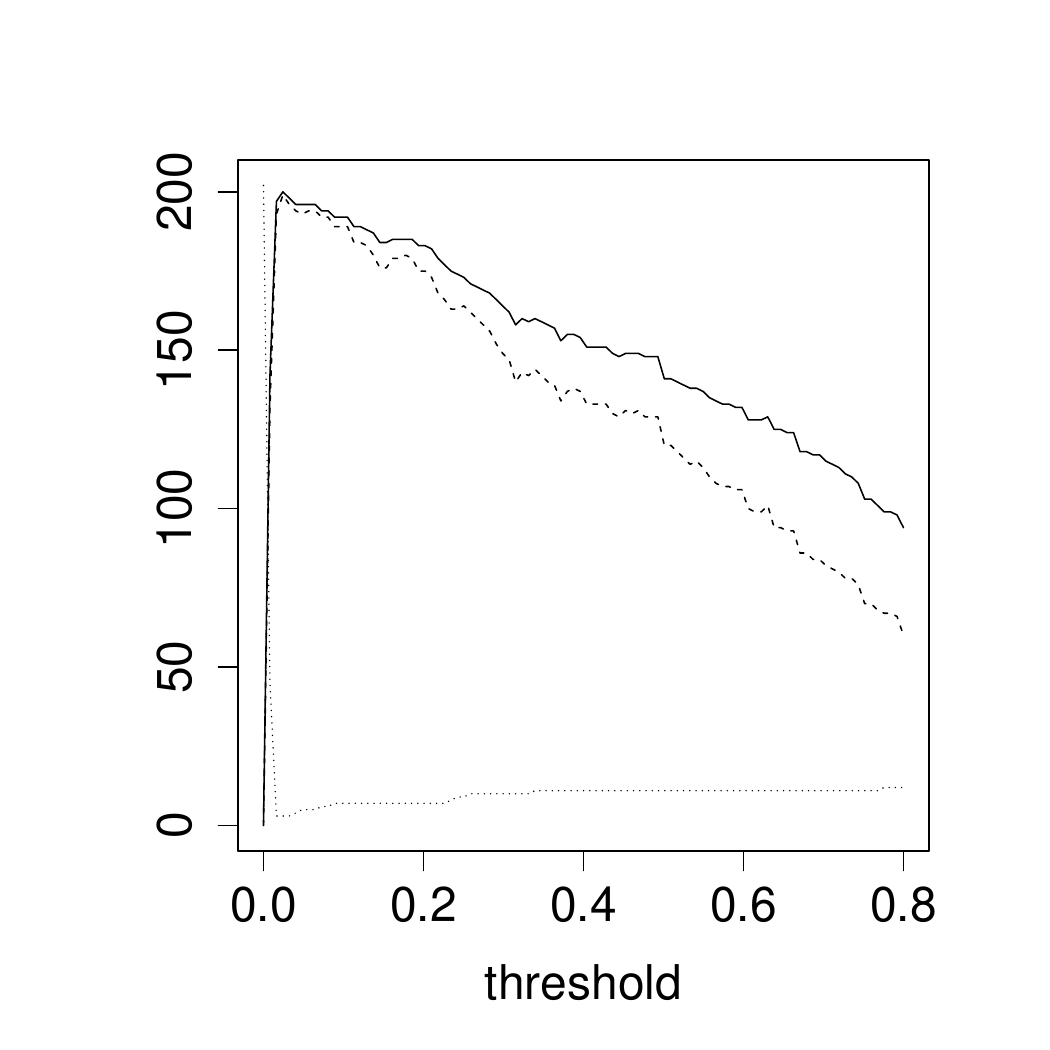}
\includegraphics[clip, trim=30 20 50 50, width=0.3\textwidth]{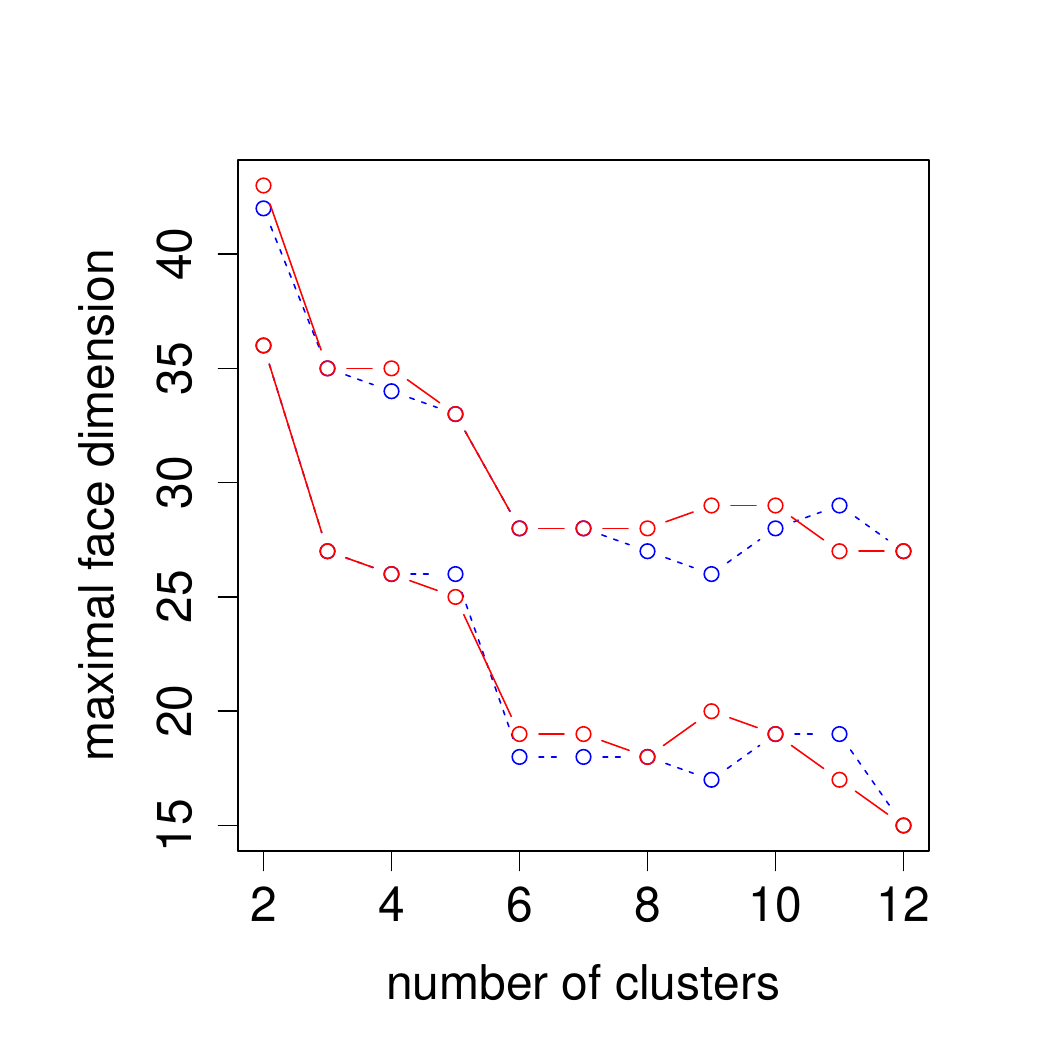}
\caption{Left: estimated $(\chi_{ij})$. Centre: truncation without clustering -- $\#$ faces (solid), $\#$ faces with a single observation (dashed), max $\#$ observations per face (dotted).
Right: clustering -- max face dimension for $\v\in\{1/4,1/3\}$.}
\label{fig:elbow}
\end{figure}

First, we investigate a basic thresholding approach without clustering as in the DAMEX algorithm~\citep{goi2016}, which assigns extreme observations (with respect to the max norm) to faces according to a chosen threshold.
Figure~\ref{fig:elbow} (centre) presents some statistics for various thresholds and also illustrates a common issue with this approach, see also~\cite{goi2017} and~\cite{chi2019} for further comments and possible solutions.
This method results in a large number of faces with a single observation, and so further grouping is needed.
Note that pruning such faces would remove a majority of the extreme observations. Upon reducing the threshold, we get one high-dimensional face containing most of the observations.
In the above example of 4 non-orthogonal faces an analogous picture is obtained.
Furthermore, similar issues (even if less pronounced) arise when using the alternative approach of~\cite{Meyer_Wintenberger_2021} in this and the previous examples.

Next, we consider clustering approaches with a relatively small number of clusters $k=2,\ldots,12$, which is desirable in practice, since every cluster leads to a submodel requiring further investigation.
In all cases the centroids produced by the two methods are very similar with the maximal angular distance of~$0.1$; here we used 30000 restarts to limit the effect of suboptimal centroids.
Figure~\ref{fig:elbow} (right) illustrates the maximal face dimension as a function of $k$, which suggests $k=6$ as an adequate number of faces.
The obtained faces using $\v=1/4$ are depicted in Figure~\ref{fig:exper6}, see also~\cite{eng2020_review} for a somewhat similar picture.
These groups of concomitant extremes exhibit nice geographical patterns.
The face dimensions are $27, 15, 4, 8, 28, 11$ (from smaller to larger circles) containing $46, 41, 19, 35, 38, 23$ extreme observations, respectively.
It is noted that the $k$-means centroids are almost the same, the cluster assignments of observations coincide, and the faces differ only slightly.
A more careful analysis may proceed by using different thresholds for each centroid, or some other face attribution procedure.

\begin{figure}[!ht]
\centering
\includegraphics[clip, trim=30 110 20 100, width=0.6\textwidth]{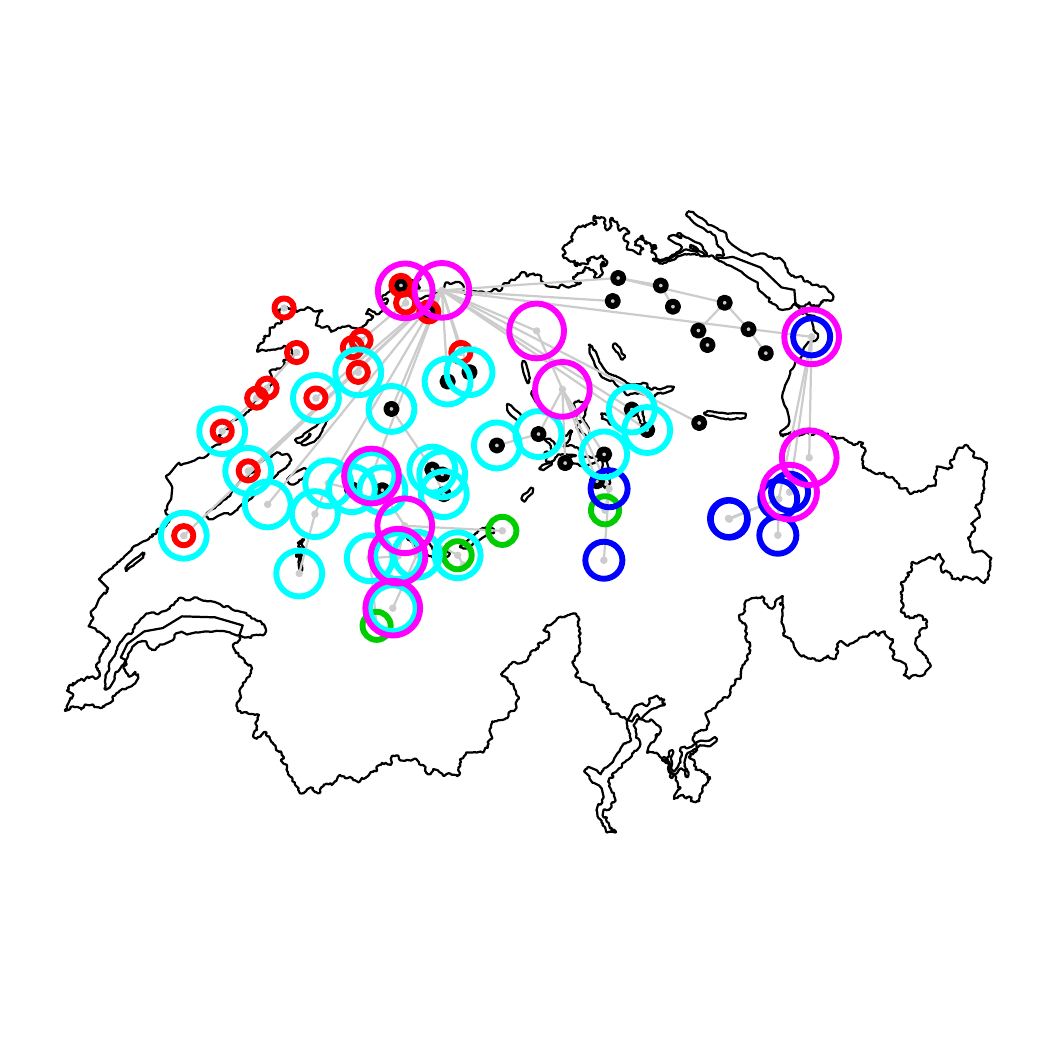}
\caption{River network with $68$ gauging stations denoted by gray points. The $k=6$ faces obtained using the $k$-pc method are depicted using circles of different color and size.}
\label{fig:exper6}
\end{figure}

In conclusion, the spherical clustering approach is a useful tool in the analysis of concomitant extremes in large dimensions. The two methods often produce similar results, but the $k$-pc method may be better in the setting when the groups exhibit weak asymptotic dependence between many pairs of their components. Importantly, it comes with a theoretical guarantee applicable in a much broader context, which is the main contribution of this work.

\section*{Acknowledgement}
The authors gratefully acknowledge financial support of Sapere Aude Starting Grant 8049-00021B ``Distributional Robustness in Assessment of Extreme Risk''.

\appendix

\section{Two Lemmas}
The first result is required for the proof of Theorem~\ref{thm:means}.

\begin{lemma}
\label{lem:triangle}
A maximizer $(\nu_1,\ldots,\nu_k)$ of
\[
\max_{\nu_1,\ldots,\nu_k\in\R^d_+} \sum_{i=1}^k \norm{\nu_i}\qquad\text{ subject to }\quad \sum_{i=1}^k \nu_i=\nu\in\R^d_+
\]
satisfies for all $i\neq j$: $\nu_i\perp\nu_j$ or both $\nu_i$ and $\nu_j$ have one positive entry at the same position. Moreover, if $k\les d$, then the maximum is attained under the assumption $\nu_i\perp\nu_j$ for all $i\neq j$.
\end{lemma}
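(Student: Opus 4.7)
The plan is to separate the two claims: the first is a pointwise structural property of \emph{every} maximizer, which I address by a local perturbation, while the second concerns the existence of a mutually orthogonal maximizer and follows from convexity of the objective over a product of simplices.

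For the first claim, suppose $(\nu_1,\ldots,\nu_k)$ is optimal and $(\nu_i)_\ell$ and $(\nu_j)_\ell$ are both strictly positive for some $i\neq j$ and some coordinate~$\ell$. Then the perturbation $\nu_i\mapsto\nu_i+te_\ell$, $\nu_j\mapsto\nu_j-te_\ell$ keeps the constraint satisfied for $|t|$ small, and I would study
\[
f(t)=\norm{\nu_i+te_\ell}+\norm{\nu_j-te_\ell}.
\]
This $f$ is convex, and a direct computation yields $f'(0)=(\nu_i)_\ell/\norm{\nu_i}-(\nu_j)_\ell/\norm{\nu_j}$ together with
\[
f''(0)=\frac{\norm{\nu_i}^2-(\nu_i)_\ell^2}{\norm{\nu_i}^3}+\frac{\norm{\nu_j}^2-(\nu_j)_\ell^2}{\norm{\nu_j}^3}\geq 0,
\]
with equality iff both $\nu_i$ and $\nu_j$ are scalar multiples of~$e_\ell$. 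Optimality forces $f'(0)=0$, while strict positivity of $f''(0)$ would render $t=0$ a strict local minimum of the convex function~$f$, contradicting maximality. Hence $f''(0)=0$, so both $\nu_i$ and $\nu_j$ have their single positive entry at position~$\ell$, which is the second alternative in the statement; if instead no shared positive coordinate exists, then $\nu_i\perp\nu_j$ by definition.

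For the moreover part, the feasible set factors coordinate-wise as a product $\prod_{j=1}^d\{x\in\R_+^k:\sum_i x_i=\nu_j\}$ of scaled $(k-1)$-simplices, hence is a compact convex polytope whose extreme points are exactly the pure assignments $(\nu_i)_j=\nu_j\ind{\sigma(j)=i}$ induced by maps $\sigma\colon\{1,\ldots,d\}\to\{1,\ldots,k\}$. The objective $\sum_i\norm{\nu_i}$ is convex in $(\nu_1,\ldots,\nu_k)$, so its maximum over the polytope is attained at such an extreme point, which automatically satisfies $\nu_i\perp\nu_j$ for all $i\neq j$; the assumption $k\leq d$ is only used so that, when desired, distinct nonzero coordinates of $\nu$ can be spread across distinct components. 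The main (minor) obstacle is the second-order analysis in the first part: one must identify the equality case of $f''(0)\geq 0$ precisely, since that is what sharpens the dichotomy from ``orthogonal or parallel'' to the cleaner ``orthogonal or both concentrated at the same single coordinate''.
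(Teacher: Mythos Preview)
Your proof is correct and takes a genuinely different route from the paper's on both claims.

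For the first claim, the paper uses a \emph{finite} perturbation: with $a=(\nu_i)_\ell>0$, $b=(\nu_j)_\ell>0$ and $c,d$ the norms of the remaining coordinates, it moves all the $\ell$-mass from one vector to the other and reduces the problem to the explicit inequality
\[
\max\{\sqrt{(a+b)^2+c^2}+d,\; c+\sqrt{(a+b)^2+d^2}\}>\sqrt{a^2+c^2}+\sqrt{b^2+d^2}
\]
when $c+d>0$, verified by squaring twice. Your infinitesimal argument is cleaner: optimality at an interior point of the perturbation interval forces $f'(0)=0$, and convexity of $f$ then makes $f''(0)>0$ impossible, so $f''(0)=0$ and both vectors collapse to~$e_\ell$. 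This avoids the algebraic inequality entirely; the paper's approach, on the other hand, is constructive in that it exhibits a strictly better feasible point.

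For the moreover part, the paper derives it from the first claim by merging any collection of $\nu_i$'s supported on the same single coordinate into their sum (plus zero vectors), which preserves $\sum_i\norm{\nu_i}$. Your extreme-point argument is independent of the first claim and more structural: the feasible set is a product of simplices, the objective is convex, so the maximum is attained at a vertex, and every vertex is a pure assignment with disjoint supports. One small remark: your argument does not actually use $k\leq d$ (zero vectors among the $\nu_i$ are harmless), so your parenthetical about spreading coordinates is superfluous---you have simply proved a slightly stronger statement than the lemma asserts.
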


\begin{proof}
The maximum is attained since it is the maximum of a continuous function over a compact set. Assume that for some index $\ell$ we have $a=\nu_{i\ell}>0$ and $b=\nu_{j\ell}>0$, which is equivalent to $\nu_i\nperp\nu_j$. Let $c=(\sum_{m\ne\ell} \nu^2_{im})^{1/2}\ges 0$ and $d=(\sum_{m\ne\ell} \nu^2_{jm})^{1/2}\ges 0$ be the norms when the $\ell$th coordinate is ignored. We further assume that $c+d>0$, which excludes the possibility of only one positive entry. It is enough to show that there exists a vector $u\in\R^d$ such that
\[
\norm{\nu_i+u}+\norm{\nu_j-u}>\norm{\nu_i}+\norm{\nu_j},\qquad \nu_i+u,\nu_j-u\ges 0,
\]
so that the maximality fails. We will consider the two possibilities $u=b e_\ell$ and $u=-a e_\ell$, where $e_\ell$ is the $\ell$th standard basis vector. It is clear that the positivity constraint is satisfied. Thus it is enough to show that
\begin{equation}
\label{eq:triangle}
\max\{\sqrt{(a+b)^2+c^2}+d,c+\sqrt{(a+b)^2+d^2}\}>\sqrt{a^2+c^2}+\sqrt{b^2+d^2},
\end{equation}
where $a,b>0$, $c,d\ges 0$ and $c+d>0$, which can be proved by assuming that $d>c$ so that the first entry in the maximum is larger than the second, and taking squares of both sides twice.

Finally, any group of $\nu_i$ with a single strictly positive entry in the same position can be replaced by their sum and zero vectors without changing the sum of norms.
\end{proof}

The proof of Theorem~\ref{thm:eigs} relies on the following non-standard upper bound on the sum of leading eigenvalues.

\begin{lemma}
\label{lemma:eigenvalues}
Let $M_1\ldots,M_k$ be symmetric non-negative definite matrices of order $d\ges k$. Then
\begin{equation}
\label{equation:eigenvalues}
\sum_{i=1}^k\lambda_1(M_i)\les\sum_{i=1}^k\lambda_i(M),\qquad M=\sum_{i=1}^kM_i.
\end{equation}
Moreover, for any $M$ there exist $M_i$ as above yielding the equality.
\end{lemma}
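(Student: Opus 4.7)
The plan is to exhibit a low-rank PSD matrix $A$ which is dominated by $M$ in the Loewner order and whose trace equals the target sum; Weyl's monotonicity then delivers the inequality in essentially one line.

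\textbf{Construction of $A$.} For each $i$, fix a unit leading eigenvector $v_i$ of $M_i$ and set $A_i=\lambda_1(M_i)\,v_iv_i^\top$. Using the spectral decomposition of $M_i$ one observes that $M_i-A_i=\sum_{j\geq 2}\lambda_j(M_i)w_j^{(i)}(w_j^{(i)})^\top$ is still PSD. Defining $A=\sum_{i=1}^k A_i$, we then have $M-A=\sum_i (M_i-A_i)\succeq 0$, so $M\succeq A$.

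\textbf{Applying Weyl.} Weyl's monotonicity (which follows from the Courant--Fischer min-max characterization of eigenvalues) gives $\lambda_j(M)\geq \lambda_j(A)$ for every $j=1,\ldots,d$. Summing the first $k$ of these inequalities,
\[
\sum_{j=1}^k\lambda_j(M)\geq\sum_{j=1}^k\lambda_j(A).
\]
Since $A$ is a sum of $k$ rank-one matrices, $\mathrm{rank}(A)\leq k\leq d$, hence all but the first $k$ eigenvalues of $A$ vanish (recall $A\succeq 0$). Therefore
\[
\sum_{j=1}^k\lambda_j(A)=\mathrm{tr}(A)=\sum_{i=1}^k\lambda_1(M_i)\norm{v_i}^2=\sum_{i=1}^k\lambda_1(M_i),
\]
which combined with the previous display is exactly~\eqref{equation:eigenvalues}. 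There is no real obstacle here; the only point requiring some care is the observation that the PSD and rank-bounded $A$ has $\lambda_{k+1}(A)=\cdots=\lambda_d(A)=0$, which is where the dimension hypothesis $d\geq k$ enters.

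\textbf{The ``moreover'' part.} Given a symmetric PSD matrix $M$ of order $d\geq k$, take its spectral decomposition $M=\sum_{j=1}^d\lambda_j(M)u_ju_j^\top$ with orthonormal $u_j$, and set $M_i=\lambda_i(M)u_iu_i^\top$ for $i=1,\ldots,k-1$ and $M_k=\sum_{j\geq k}\lambda_j(M)u_ju_j^\top$. Each $M_i$ is PSD, they sum to $M$, and by construction $\lambda_1(M_i)=\lambda_i(M)$ for every $i\leq k$, so the inequality~\eqref{equation:eigenvalues} is saturated.
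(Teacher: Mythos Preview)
Your proof is correct, and it takes a genuinely different route from the paper. The paper invokes Fan's variational formula $\sum_{i=1}^k\lambda_i(M)=\max\{\sum_i v_i^\top M v_i: v_i^\top v_j=\ind{i=j}\}$, takes the leading eigenvectors $u_i$ of the $M_i$, orthonormalizes them via Gram--Schmidt to obtain $v_1,\ldots,v_k$, and then carries out a rather hands-on expansion of the quadratic forms $u_i^\top M_i u_i$ in the $v_j$ basis, checking term by term that the difference $\sum_i\lambda_i(M)-\sum_i\lambda_1(M_i)$ is a sum of non-negative expressions of the type $(c_{i\ell}v_j-c_{ij}v_\ell)^\top M_i(c_{i\ell}v_j-c_{ij}v_\ell)$. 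Your argument is more structural: replacing each $M_i$ by its rank-one top piece $A_i$ gives a Loewner comparison $M\succeq A$, and then Weyl's monotonicity plus the rank bound on $A$ finish the job via the trace. This is shorter and conceptually cleaner; the paper's version has the minor advantage of being self-contained (no appeal to Weyl), but at the cost of a noticeably more technical computation. The ``moreover'' constructions in the two proofs are essentially identical.
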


\begin{proof}
By the result of~\cite{fan1949theorem}, see also~\cite[Thm.\ 1]{overton1992sum}, we have
\begin{equation}
\label{eq:fan}
\sum_{i=1}^k \lambda_i(M)=\max\left\{\sum_{i=1}^k v_i^\top Mv_i: v_i^\top v_j=\ind{i=j}\right\}
\end{equation}
and, in particular, $\lambda_1(M)=\max\{v^\top Mv: \norm{v}=1\}$.

Let vectors $u_1,\ldots,u_k$ have unit norms and be such that $\lambda_1(M_i)=u_i^\top M_iu_i$, and let $v_1,\ldots,v_k$ be mutually orthogonal vectors with unit norms obtained via the Gram--Schmidt process. That is, for all $i=1,\ldots,k$ we have $u_i=\sum_{j=1}^i c_{ij}v_{j}$, where $\sum_{j=1}^i c_{ij}^2=1$.

By~\eqref{eq:fan} we have
\[
\sum_{j=1}^k \lambda_j(M)\ges\sum_{j=1}^k v_j^\top M v_j=\sum_{i=1}^k \sum_{j=1}^k v_j^\top M_iv_j\ges\sum_{i=1}^k \sum_{j=1}^i v_j^\top M_iv_j,
\]
where in the last step we use the assumption that $M_i$ are non-negative definite. Furthermore, for any $i$ we have the representation
\[
\lambda_1(M_i)=u_i^\top M_iu_i=\left(\sum_{j=1}^i c_{ij}v_j^\top\right)M_i\left(\sum_{j=1}^i c_{ij}v_j\right)=\sum_{j=1}^i c_{ij}^2v_j^\top M_iv_j+\sum_{j\les i} \sum_{j\neq\ell\les i} c_{ij}c_{i\ell}v_j^\top M_iv_\ell.
\]
Thus, we find a lower bound on the difference of interest:
\[
\sum_{i=1}^k \lambda_i(M)-\sum_{i=1}^k \lambda_1(M_i)\ges\sum_{i=1}^k\left(\sum_{j=1}^i (1-c_{ij}^2)v_j^\top M_iv_j-\sum_{j\les i} \sum_{j\neq\ell\les i} c_{ij}c_{i\ell}v_j^\top M_iv_\ell\right).
\]
But according to the constraints on $c_{ij}$ we have
\[
\sum_{j=1}^i (1-c_{ij}^2)v_j^\top M_iv_j=\sum_{j\les i} \sum_{j\neq\ell\les i} c^2_{i\ell}v_j^\top M_iv_j,
\]
and it is left to check non-negativity of the terms
\[
\sum_{j\les i} \sum_{j\neq \ell\les i} (c^2_{i\ell}v_j^\top M_iv_j-c_{ij}c_{i\ell}v_j^\top M_iv_\ell)=\sum_{1\les\ell<j\les i}(c_{i\ell}v_j-c_{ij}v_\ell)^\top M_i(c_{i\ell}v_j-c_{ij}v_\ell),
\]
which is true due to assumed non-negative definiteness of all $M_i$, and the first assertion is proven.

The second assertion is obtained by taking the matrices
\begin{gather*}
M_i=Q\diag(0,\ldots,0,\lambda_i(M),0,\ldots,0)Q^\top,\quad 1\les i\les k-1,\\
M_k=Q\diag(0,\ldots,0,\lambda_k(M),\lambda_{k+1}(M),\ldots,\lambda_d(M))Q^\top,
\end{gather*}
where $Q$ is an orthogonal matrix in the diagonalization of $M$. Indeed, the matrices $M_i$ are non-negative definite summing up to $M$, and the largest eigenvalues are $\lambda_i(M)$ for $i\les k$.
\end{proof}

\section{H\"usler--Reiss distribution}
\subsection{Random generation of variogram}
A $d$-dimensional max-stable H\"usler--Reiss distribution~\citep{Husler1989} is parametrized by a conditionally negative definite $d\times d$ matrix $\Gamma$, called a variogram. Every such matrix has a representation $\Gamma_{ij}=\norm{h_i-h_j}^2$, where $h_1,\ldots,h_d$ are the elements of some Hilbert space with norm $\norm{\cdot}$, and every such construction yields a conditionally negative definite matrix, see~\cite[Property~(g), p.~191]{Vakhania_Tarieladze_Chobanyan}. As usual, we impose a further non-degeneracy assumption that $\Gamma$ is strictly conditionally negative definite so that the associated exponent measure has a density~\citep{Engelke2015}.

Suppose that $Y$ has a max-stable H\"usler--Reiss distribution. Our main assumption requires a partition $I_1\cup I_2=\{1,\ldots,d\}$ of the index set such that $\chi_{ij}=\chi_{ji}=0$ for all $i\in I_1$ and $j\in I_2$, which is equivalent to (asymptotic) independence of the two groups of components of~$Y$. Recall the formula for the bivariate tail dependence coefficient:
\[
\chi_{ij}=2\overline\Phi(\sqrt{\Gamma_{ij}}/2),
\]
where $\overline\Phi$ is the survival function of the standard normal distribution, and note that $\chi_{ij}=0$ is obtained in the limit case $\Gamma_{ij}=\infty$. In practice, large entries in the respective locations of $\Gamma$ would  suffice, because of the weak convergence of the respective distributions.

In our experiments we randomly generate $\Gamma$ for $d=100$ according to the following procedure based on the aforementioned representation. Firstly, we generate $d$-dimensional vectors $h_1,\ldots,h_d$ with i.i.d.\ Pareto distributed components having shape parameter~$2.5$. Secondly, we add one more dimension by letting $\widetilde{h}_i=(h_i,L\ind{i\in I_1})$, where $L=10^5$ is a fixed large number. Then the variogram matrix $\Gamma$ is set to $\frac{3}{d}(\norm{\widetilde{h}_i-\widetilde{h}_j}^2)_{ij}$, where $3/d$ provides a scaling resulting in a suitable distribution of the tail dependence coefficients. Note that this procedure ensures that $\Gamma_{ij}\geqslant L$ for $i$ and $j$ in different groups.

Finally, we use the R package~\citep{graphicalExtremes} to sample vectors $Y$ from the H\"usler--Reiss distribution specified by~$\Gamma$. Note that $Y/\norm{Y}$ given $\norm{Y}>t$ for a finite threshold $t$ provides only an approximation of the limiting angle. The distribution of the exact angle is addressed below.

\subsection{The angular distribution and the matrix $\Sigma$}
Even though the exact matrix $\Sigma=\e(XX^\top)$ is not required for the simulation experiments, it can be used to verify the sufficient conditions in our main result. Below we determine the probability density function of $X$ (with respect to the Euclidean norm) and provide an expression for $\Sigma$ in terms of expectations of transformed Gaussian vectors.

As in~\cite{Engelke2015}, we consider the $(d-1)\times(d-1)$ covariance matrix
\[
R=\left\{\frac{1}{2}(\Gamma_{id}+\Gamma_{jd}-\Gamma_{ij})\right\}_{i,j\neq d}.
\]
For all $i=1,\ldots,d$ we define the transformation
\begin{equation}
\label{eq:t}
t_i(x)=\log(x_i/x_d)=\log\left(x_i/\sqrt{1-x_1^2-\cdots-x_{d-1}^2}\right),\qquad x\in\mathrm{int}(\Sp^{d-1}),
\end{equation}
where $\mathrm{int}(\Sp^{d-1})$ stands for the simplex interior, and note that $t\colon\mathrm{int}(\Sp^{d-1})\to\R^{d-1}$ is a bijection with the inverse
\[
t^{-1}(z)=\frac{(\exp(z_1),\ldots,\exp(z_{d-1}),1)^\top}{\sqrt{1+\exp(2 z_1)+\cdots+ \exp(2z_{d-1})}}.
\]
It turns out that each column of the matrix $\Sigma/\mu$ corresponds (up to a permutation) to $\e(t^{-1}(Z))$ for a multivariate normal~$Z$ as specified below. The means and other moments of $X$ can be obtained in a similar way.

\begin{lemma}
For the $d$-dimensional H\"usler--Reiss distribution with the variogram $\Gamma$ the density of the Euclidean angle $(X_1,\ldots,X_{d-1})$ is given by
\begin{equation}
f(x_1,\ldots,x_{d-1})=\mu\, x^{-2}_d\prod_{i\leqslant d} x^{-1}_i\,\phi_{d-1}(t_1(x),\ldots, t_{d-1}(x)),\qquad x\in\mathrm{int}(\Sp^{d-1}),
\end{equation}
where $\mu=\e(X_1)$ and $\phi_{d-1}$ is the density of the multivariate normal distribution with covariance matrix $R$ and mean vector $-(\Gamma_{1d},\ldots,\Gamma_{d-1,d})^\top/2$. Furthermore, with $g$ being the density of $(W_1,\ldots,W_{d-1})$, where $W=t^{-1}(Z)\in\Sp^{d-1}$ and $Z\sim\phi_{d-1}$, the following identities hold:
\begin{gather*}
f(x_1,\ldots,x_{d-1})=\mu g(x_1,\ldots,x_{d-1})/x_d,\\
\mu^{-1}=\e(W_d^{-1}),\quad\e(X_iX_d)=\mu\e(W_i),\quad i=1,\ldots,d.
\end{gather*}
\end{lemma}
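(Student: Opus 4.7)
The plan is to start from the H\"usler--Reiss exponent measure density derived in~\cite{Engelke2015},
\[
\lambda(y)=y_d^{-2}\prod_{i=1}^{d-1}y_i^{-1}\cdot\phi_{d-1}\big(\log(y_1/y_d),\ldots,\log(y_{d-1}/y_d)\big),\qquad y\in(0,\infty)^d,
\]
with $\phi_{d-1}$ exactly as in the statement of the lemma, and convert it to the Euclidean angular density of $X$. Since the marginals are unit Fr\'echet, $\lambda$ is homogeneous of degree $-(d+1)$, so the polar decomposition $y=r\theta$ with $r=\norm{y}$ and $\theta\in\Sp^{d-1}$ together with $dy=r^{d-1}\,dr\,d\sigma(\theta)$ produces the clean representation $\Lambda(dy)=r^{-2}\,dr\cdot\lambda(\theta)\,d\sigma(\theta)$, where $d\sigma$ is the surface measure on $\Sp^{d-1}$. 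Thus $H(d\theta)=\lambda(\theta)\,d\sigma(\theta)$ is the angular measure.

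The next step is to identify its total mass. The unit Fr\'echet constraint $\p(Y_i>t)\sim 1/t$ forces $\Lambda(\{y:y_i>1\})=1$, and rewriting this in polar coordinates gives $\int\theta_i\,H(d\theta)=1$ for each~$i$. Averaging over~$i$ and invoking $\e X_i=\mu$ from~\eqref{eq:means} yields $H(\Sp^{d-1})=1/\mu$, so the probability density of $X$ with respect to $d\sigma$ is exactly $\mu\lambda(\theta)$. Parametrising the simplex interior by $(x_1,\ldots,x_{d-1})$ turns the surface element into $d\sigma=dx_1\cdots dx_{d-1}/x_d$, and substituting this into $\mu\lambda(\theta)\,d\sigma$ reproduces the first displayed formula for $f$ after absorbing a single factor of $x_d$.

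For the second part, the key calculation is the Jacobian of $t\colon{\rm int}(\Sp^{d-1})\to\R^{d-1}$ from~\eqref{eq:t}. Using $\partial x_d/\partial x_j=-x_j/x_d$ and the chain rule on $t_i=\log x_i-\log x_d$, the Jacobian decomposes as the rank-one perturbation $Dt(x)=\diag(1/x_1,\ldots,1/x_{d-1})+x_d^{-2}\mathbf{1}\mathbf{x}^\top$, where $\mathbf{1}=(1,\ldots,1)^\top$ and $\mathbf{x}=(x_1,\ldots,x_{d-1})^\top$; the matrix determinant lemma then gives $|\det Dt(x)|=1/(x_d^2\prod_{i<d}x_i)$ in one line. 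The change of variables formula therefore yields $g(w)=\phi_{d-1}(t(w))/(w_d^2\prod_{i<d}w_i)$, and direct comparison with the first formula for $f$ produces $f(x)=\mu g(x)/x_d$. The remaining two identities then fall out by integration: $\int f\,dx=1$ reads $\mu\,\e(W_d^{-1})=1$, while multiplying by $x_ix_d$ before integrating over the simplex gives $\e(X_iX_d)=\mu\,\e(W_i)$. The only non-routine step is the Jacobian computation, and the rank-one structure of $Dt$ reduces it to a one-line application of the matrix determinant lemma.
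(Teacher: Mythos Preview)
Your proof is correct and follows the same route as the paper: pass from the exponent-measure density to the Euclidean angular density via polar coordinates (the paper cites \cite[Prop.\ 5.11(iv)]{res2008} for the factorisation $r^{-2}f(x)/\mu$, whereas you derive the total mass $H(\Sp^{d-1})=1/\mu$ from the Fr\'echet marginal constraint directly), then compute the Jacobian of $t$ to relate $f$ and $g$, and finally integrate. Your Jacobian computation---recognising $Dt(x)=\diag(1/x_1,\ldots,1/x_{d-1})+x_d^{-2}\mathbf{1}\mathbf{x}^\top$ as a rank-one perturbation and applying the matrix determinant lemma---is slicker than the paper's, which expands the determinant and appeals to a formula from~\cite{Goberstein} to reach the same value $1/(x_d^2\prod_{i<d}x_i)$.
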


\begin{proof}
From~\cite{Engelke2015} it is known that the exponent measure has the density
\[
\lambda(y)=y^{-1}_d\prod_{i\leqslant d} y^{-1}_i\,\phi_{d-1}(\log(y_1/y_d),\ldots, \log(y_{d-1}/y_d)),\qquad y_1,\ldots,y_d>0.
\]
We note that for the polar coordinates
\[
y_1=rx_1,\quad y_2=rx_2,\quad\ldots,\quad y_d=r(1-x_1^2-\ldots-x_{d-1}^2)^{1/2}
\]
the absolute value of the Jacobian evaluates to $r^{d-1}/x_d$, and so in these coordinates the density becomes
\[
r^{-2}x^{-2}_d\prod_{i\leqslant d} x^{-1}_i\,\phi_{d-1}(t_1(x),\ldots,t_{d-1}(x)).
\]
This must factorize into $r^{-2}f(x_1,\ldots,x_{d-1})/\mu$ according to~\cite[Prop.\ 5.11(iv)]{res2008} and the first result follows.

Let us compute the Jacobian $J_t(x)$ of the transformation $t(x)$ defined in~\eqref{eq:t} and restricted to the first $d-1$ values. We have
\[
\dfrac{\partial t_i}{\partial x_j}(x)=\dfrac{x_j}{s},\quad j\neq i,\quad\text{and}\quad \dfrac{\partial t_i}{\partial x_i}(x)=\dfrac{x_i^2+s}{x_is},
\]
where $s=1-x_1^2-\ldots-x_{d-1}^2=x_d^2$, and so
\[
J_t(x)=
\begin{vmatrix}
\dfrac{x_1^2+s}{x_1s} & \dfrac{x_2}{s} & \ldots & \dfrac{x_{d-1}}{s}\\
\dfrac{x_1}{s} & \dfrac{x_2^2+s}{x_2s} & \ldots & \dfrac{x_{d-1}}{s}\\
\vdots & \vdots & \ddots & \vdots\\
\dfrac{x_1}{s} & \dfrac{x_2}{s} & \ldots & \dfrac{x_{d-1}^2+s}{x_{d-1}s}\\
\end{vmatrix}
=\dfrac{x_1\ldots x_{d-1}}{s^{d-1}}
\begin{vmatrix}
\dfrac{s}{x_1^2}+1 & 1 & \ldots & 1\\
1 & \dfrac{s}{x_2^2}+1 & \ldots & 1\\
\vdots & \vdots & \ddots & \vdots\\
1 & 1 & \ldots & \dfrac{s}{x_{d-1}^2}+1\\
\end{vmatrix}
.
\]
Using formula~(1) in~\cite{Goberstein}, we find that the last determinant is equal to
\[
\dfrac{s^{d-1}}{x_1\ldots x_{d-1}}+\dfrac{s^{d-1}}{x_1\ldots x_{d-1}}\cdot \left(\dfrac{x_1^2}{s}+\ldots+\dfrac{x_{d-1}^2}{s}\right)=\dfrac{1}{x_1\ldots x_{d-1}s}.
\]
Thus, we have the relation $g(x)=(x_1\ldots x_{d-1}s)^{-1}\phi_{d-1}(t(x))$, where $x$ and $t(x)$ are restricted to the first $d-1$ elements. Comparing it with $f$, we indeed find the stated relation between $f$ and~$g$. Finally, observe that
\[
\e(X_iX_d)=\mu\int x_ig(x)\,dx=\mu\e(W_i),\qquad 1=\mu\e(W^{-1}_d).
\]
The proof is complete.
\end{proof}

The above result states that the last column of $\Sigma/\mu$ is given by the vector $\e(W)$. By changing the indexing we may easily find the other columns. In particular, in the bivariate case with parameter $\gamma=\Gamma_{12}$ we have
\[
W_1=\left[1+\exp\left(-2\sqrt\gamma Z_0+\gamma)\right)\right]^{-1/2},\qquad W_2=(1-W_1^2)^{1/2}=\left[1+\exp\left(2\sqrt\gamma Z_0-\gamma)\right)\right]^{-1/2},
\]
where~$Z_0$ is a standard normal. Sadly, the moments $w_1=\e(W_1)$, $w_2=\e(W_2)$ are not explicit, and the same applies to the matrix~$\Sigma/\mu=\left(\begin{smallmatrix} 1-w_2 & w_1\\ w_1 & w_2\end{smallmatrix}\right)$.

\end{document}